
\documentclass[twoside]{article}

\usepackage{amsmath,amsfonts,amssymb,amscd,amsthm,amsbsy}
\usepackage{fancyhdr,latexsym,epsfig,enumerate}

\usepackage[latin1]{inputenc}
\usepackage[T1]{fontenc}
\usepackage[matrix,arrow]{xy}
\usepackage{endnotes}

\setlength{\hsize}{125mm}
\setlength{\textwidth}{125mm}
\setlength{\textheight}{190mm}
\setlength\oddsidemargin{2cm}
\setlength\evensidemargin{2cm}

\makeatletter

\def\ps@myheadings{%
     \let\@oddfoot\@empty\let\@evenfoot\@empty
     \def\@evenhead{\thepage\hfil\MakeUppercase{\footnotesize\leftmark}\hfil}%
     \def\@oddhead{{\hfil\MakeUppercase{\footnotesize\rightmark}}\hfil\thepage}%
     \let\@mkboth\@gobbletwo
     \let\sectionmark\@gobble
     \let\subsectionmark\@gobble
     }
\makeatother


\newtheorem{thm}{Theorem}[section]
\newtheorem{lem}[thm]{Lemma}
\newtheorem{prop}[thm]{Proposition}
\newtheorem{cor}[thm]{Corollary}

\newtheorem{defin}[thm]{Definition}
\theoremstyle{definition}
\newtheorem{rem}[thm]{Remark}

\newtheorem{exo}[thm]{Example}

\newcommand{\surf}[2]{\widehat{\Sigma}_{{#1}{#2}}}
\newcommand{\bsurf}[3]{\mathop{B_{#1}(\widehat{\Sigma}_{{#2}{#3}})}}

\newcommand{\relem}[1]{Lemma~\protect\ref{lem:#1}}
\newcommand{\repr}[1]{Proposition~\protect\ref{prop:#1}}


\newcommand{\brak}[1]{\ensuremath{\left\{ #1 \right\}}}

\newcommand{\sset}[2]{\ensuremath{\brak{#1 \, , \ldots, \, #2}}}

\newcommand{\N}{\ensuremath{\mathbb N}}
\newcommand{\Z}{\ensuremath{\mathbb Z}}
\newcommand{\D}{\ensuremath{\mathbb D}}

\newcommand{\FF}{\ensuremath{\mathbb F}}
\newcommand{\F}[1][n]{\ensuremath{\F\FF_{{#1}}}}

\renewcommand{\epsilon}{\ensuremath{\varepsilon}}
\renewcommand{\phi}{\ensuremath{\varphi}}
\renewcommand{\to}{\ensuremath{\longrightarrow}}

\newcommand{\MB}[3][,n]{\ensuremath{B_{{#2}{#1}}({#3})}}


\def\tia{\tilde{a}}
\def\tib{\tilde{b}}
\def\z{\zeta}
\def\si{\sigma}
\def\tsi{\tilde{\sigma}}
\def\qk{P_k}
\def\bqk{\bar{P}_k}

\begin{document}

\pagestyle{myheadings}

\markboth{Paolo Bellingeri, Eddy Godelle and John Guaschi}{Exact sequences, lower central series and representations of surface braids    \hspace{2pt}  }

\title{Exact sequences, lower central series and representations of surface braid groups}

\author{Paolo Bellingeri, Eddy Godelle and John Guaschi}

\date{}

\maketitle

\begin{abstract}
We consider exact sequences and lower central series of (surface) braid groups and we explain how they can prove to be useful for obtaining representations for surface braid groups. In particular, using a completely algebraic framework, we describe the notion of extension of a representation introduced and studied recently by An and Ko and independently by Blanchet.
\end{abstract}
\begingroup
\renewcommand{\thefootnote}{}
\footnotetext{2000 AMS Mathematics Subject Classification: 20F14, 20F36. {\it Keywords:} surface braid groups, linear representations of braid groups}
\endgroup 

\maketitle


\section{Introduction}


The faithful linear representations of Bigelow-Krammer-Lawrence of the Artin braid groups $B_n$ are probably one of the most important recent discoveries in the theory of braid groups, and as such, have been intensively studied over the last few years. They have also been extended to other groups, such as Artin-Tits groups of spherical type (see for instance~\cite{CW,D}). However, with the exception of a few results~\cite{Bar2,BiB,GG7}, their generalisation in a more topological direction, to braid and mapping class groups of surfaces for example, as well as the linearity of these groups, are open problems in general. 

\medskip

The aim of this paper is twofold. The first is to underline the relevance of short exact sequences of braid groups and their generalisations to the study of representations of these groups. As we shall recall in Section~\ref{section2}, the Burau and Bigelow-Krammer-Lawrence representations appear when one studies certain `mixed' extensions of $B_{n}$ arising from fibrations at the level of configuration spaces. This extension splits, and in the simplest case, its kernel is the fundamental group of the $n$-punctured disc $\mathbb{D}_{n}$. The Burau representation then occurs as the induced action of $B_{n}$ on the homology of the infinite cyclic covering $\widetilde{\mathbb{D}}_{n}$ of $\mathbb{D}_{n}$. 

\medskip  

The Bigelow-Krammer-Lawrence representations may be obtained in a similar way in terms of the Borel-Moore middle homology group of a $\Z^{2}$-covering of $\mathbb{D}_{n}$ (see Section~\ref{section2.3} for more details). Motivated by these constructions, one possible strategy for obtaining linear representations of (surface) braid groups is to study exact sequences of these groups. With this in mind, we recall the definitions of surface braid groups and their short exact sequences in Section~\ref{section3}, as well as the known results concerning the splitting of these sequences (Lemma \ref{lem:fibrations}). 
In the case of the short exact sequence of `mixed' Artin braid groups, the induced short exact sequence on the level of commutator subgroups brings into play groups and homomorphisms that appear in the construction of Bigelow-Krammer-Lawrence representations of $B_{n}$ (see the end of Section~\ref{section2.3}).

Before stating similar results for exact sequences of surface braid groups, in Section~\ref{section4}, we recall the basic definitions pertaining to the lower central series $(\Gamma_{i}(G))_{i\in \N}$ of a group $G$, and we show that if $G$ is the braid or mixed braid group (with a sufficiently large number of strings) of a compact orientable surface $\surf{g}{}$ with a single boundary component, the quotient group $G/\Gamma_{3}(G)$ is a semi-direct product of free Abelian groups. We provide group presentations for such quotients (Lemma \ref{lem:presgamma} and Corollary \ref{cor:decgammamix}) that will play an important r\^ole in the rest of the paper.

In~\cite{HK}, An and Ko described an extension of the Bigelow-Krammer-Lawrence representations of $B_{n}$ to braid groups of orientable surfaces of positive genus and with non-empty boundary. However, it is not currently known whether these representations are faithful.  The representation is based on the regular covering arising from
a projection of the $n$-th braid group of a surface $\Sigma$ with non-empty boundary onto a specific group  $G_\Sigma$, constructed in a technical manner in order to satisfy certain homological constraints (Section~3  and Definition 2.2 of  \cite{HK}) and which  turns out to be a Heisenberg group;  more recently, the above projection  has been independently studied by Christian Blanchet to obtain a representation of a large subgroup of the Torelli group of a surface with one boundary component  containing the Johnson subgroup \cite{Bl}.

The second aim of our paper is to show that  technical construction proposed in~\cite{HK} of the representations may be described in terms of lower central series and exact sequences of surface braid groups: this is the object of  Section~\ref{section5}.  As we mentioned above, in the case of Artin braid groups, the induced short exact sequence on the $\Gamma_{2}$-level gives rise to elements used in the construction of the Bigelow-Krammer-Lawrence representations. In the case of surface braid groups, the corresponding construction on the same level does not work (Proposition \ref{nolinearext}), but  if we take this construction a stage further, to the $\Gamma_{3}$-level, we obtain the corresponding objects of the An-Ko representations. More precisely   we show how to use the $\Gamma_{3}$-level to extend Bigelow-Krammer-Lawrence representations and we prove that such extensions are unique up to isomorphism (Propositions  \ref{fondpropex} and \ref{fondprop}), according to Definition \ref{fonddef}.

We finish the main part of the paper with another possible application of the lower central series of surface braid groups  by showing that  the standard length function on $B_{n}$ admits a unique extension to a homomorphism whose source is the braid group of a surface of positive genus with one boundary component (Proposition \ref{fondpropcons}), and that there is no such extension if the surface is closed and orientable (Proposition \ref{fondpropcons2}). In an Appendix, we discuss the relationship between the splitting of the `mixed' surface braid group sequences and that of their restriction to the corresponding pure braid groups. In the terminology of~\cite{FR}, we show that this restriction never give rises to an \emph{almost-direct} product (an almost-direct product structure means that the extension is split, and that the induced action of the lower central series quotient $K/\Gamma_{2}(K)$ on the kernel $K$ is trivial). 

\subsection*{Acknowledgements}

The authors were partially supported by the~ANR project TheoGar `Th\'eorie de Garside' n\textsuperscript{o} ANR-08-BLAN-0269-02.


\section{Exact sequences, lower central series and representations of classical braid groups} \label{section2}



\subsection{Surface braid groups and configuration spaces}


Surface braid groups are a natural generalisation of  both the classical
braid groups and of the fundamental group of surfaces.
First defined by Zariski during the 1930's, they were re-discovered
by Fox during the 1960's, and have been used subsequently in the study of
mapping class groups.

We recall the definition due to Fox of these groups in terms of
fundamental groups of configuration spaces~\cite{FoN}. Let $\Sigma$ be a connected surface. Let $\FF_n(\Sigma)=\Sigma^n
\setminus \Delta$, where $\Delta$ is  the set of $n$-tuples $x=(x_1,
\dots, x_n)\in \Sigma^n$ for which $x_i=x_j$ for some $i \not= j$. The
fundamental group $\pi_1(\FF_n(\Sigma))$ is called the \emph{pure
braid group} on $n$ strands of the surface $\Sigma$; it shall be
denoted by $P_n(\Sigma)$. There is a natural action of the symmetric
group $S_n$ on $\FF_n(\Sigma)$ by permutation of coordinates; the
fundamental group $\pi_1(\FF_n(\Sigma)/S_n)$ is called the \emph{braid
group} on $n$ strands of the surface $\Sigma$ and shall be denoted by
$B_n(\Sigma)$. Then $\FF_n(\Sigma)$ is a regular $n!$-fold covering of
$\FF_n(\Sigma)/S_n$ that gives rise to the following short exact sequence:

\begin{equation}\label{eq:permutation}
1\to P_n(\Sigma) \to B_n(\Sigma) \to S_{n}\to 1.
\end{equation}

Regarded as a subgroup of $S_{k+n}$, the group $S_k
\times S_n$ acts on $\FF_{k+n}(\Sigma)$. The fundamental group
$\pi_1\left(\FF_{k+n}(\Sigma)/(S_k \times S_n)\right)$ will be called
the \emph{mixed braid group of $\Sigma$ on $(k,n)$ strands}, and shall
be denoted by $\MB{k}{\Sigma}$. Notice that $\MB{k}{\Sigma}$ embeds canonically in $B_{k+n}(\Sigma)$.
These intermediate groups between pure braid and braid groups
of a surface, known as `mixed' braid groups, play an important r\^ole in~\cite{HK}. They were defined
previously in~\cite{GG2,Man,PR}, and were studied in more detail
in~\cite{GG4} in the case where $\Sigma$ is the $2$-sphere
$\mathbb{S}^2$.


\subsection{Fibrations and induced exact sequences}


We recall that  $\pi_1(\FF_n(\D^2))$ is isomorphic to the pure
braid group on $n$ strands, usually 
denoted by $P_n$, while $\pi_1(\FF_n(\D^2)/S_n)$ is isomorphic to  the braid
group  $B_n$ on $n$ strands. In what follows, 
the mixed braid group on $(k,n)$ strands
$\pi_1\left(\FF_{k+n}(\D^2)/(S_k \times S_n)\right)$  will be denoted simply by $B_{k,n}$.

\noindent The fibration $\FF_{k+n}(\D^2)/(S_k \times S_n) \to \FF_n(\D^2)/S_n$
given by forgetting the first $k$ coordinates is a locally-trivial
fibration whose fibre over a point $\{ x_1, \ldots, x_n \}$ may be identified with the orbit space
$\FF_k(\D^2 \setminus \sset{x_{1}}{x_{n}} )/S_{k}$.
From now on we denote $\D^2 \setminus \sset{x_{1}}{x_{n}}$ by $\D_n$.
Let us also denote  $\pi_1(\FF_k(\D_n )/S_{k})$
by $B_k(\D_n) $; this group turns out to be isomorphic to the subgroup of $B_{k+n}$ consisting of braids where the last $n$ strands are trivial (vertical).
The long exact sequence in homotopy of the above fibration yields a short exact sequence.

\begin{lem} 
Let $k,n\in \mathbb{N}$. The Fadell-Neuwirth fibration
$\FF_{k+n}(\D^2) \to \FF_{n}(\D^2)$
induces the short exact sequence:
\begin{equation}\label{eq:sequenceclassic}
1 \to B_k(\D_n) \to B_{k,n}
\to B_n  \to 1 \, . \tag{MB}
\end{equation}
\end{lem}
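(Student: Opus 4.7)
The plan is entirely standard: apply the long exact sequence in homotopy to the locally trivial fibration $p\colon \FF_{k+n}(\D^2)/(S_k\times S_n) \to \FF_n(\D^2)/S_n$ constructed immediately above the statement, whose fibre over a basepoint $\{x_1,\dots,x_n\}$ is the orbit space $\FF_k(\D_n)/S_k$. By the definitions recalled in Section~2.1, the fundamental groups of fibre, total space and base are exactly $B_k(\D_n)$, $B_{k,n}$ and $B_n$ respectively, so the three middle terms of the expected sequence come for free; it remains only to truncate the long exact sequence.

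The relevant portion of that long exact sequence reads
\[\pi_2(\FF_n(\D^2)/S_n) \to \pi_1(\FF_k(\D_n)/S_k) \to \pi_1(\FF_{k+n}(\D^2)/(S_k\times S_n)) \to \pi_1(\FF_n(\D^2)/S_n) \to \pi_0(\FF_k(\D_n)/S_k),\]
so to extract \reqref{sequenceclassic} I must check that both outer terms vanish. The right-hand $\pi_0$ is trivial because $\D_n$ is a connected $2$-manifold, hence its ordered configuration spaces, and in turn their $S_k$-quotients, are path-connected. The vanishing of $\pi_2$ on the left comes from the classical Fadell--Neuwirth asphericity theorem: by iterating their fibration one sees that $\FF_n(\D^2)$ is a $K(P_n,1)$, and since $S_n$ acts freely on it the quotient $\FF_n(\D^2)/S_n$ is a $K(B_n,1)$; in particular all $\pi_i$ with $i \ge 2$ vanish.

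The only point that might be called an obstacle is the assertion, made in the paragraph preceding the statement, that $p$ really is a locally trivial fibration. This follows from the Fadell--Neuwirth argument for ordered configuration spaces of manifolds (the local trivialisation is built from small pairwise disjoint discs around the $n$ points of a chosen base configuration), together with the fact that $S_k\times S_n$ acts freely and properly discontinuously on $\FF_{k+n}(\D^2)$: the two covering projections $\FF_{k+n}(\D^2)\to\FF_{k+n}(\D^2)/(S_k\times S_n)$ and $\FF_n(\D^2)\to\FF_n(\D^2)/S_n$ intertwine with the local product structure in a compatible way, so the fibration descends cleanly to the symmetric quotients and one may invoke the homotopy long exact sequence as above.
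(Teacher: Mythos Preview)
Your argument is correct and is exactly the standard one the paper has in mind: the paper does not give a separate proof of this lemma but simply states that ``the long exact sequence in homotopy of the above fibration yields a short exact sequence,'' and you have filled in precisely those details (local triviality of the fibration, connectedness of the fibre, and Fadell--Neuwirth asphericity to kill~$\pi_2$).
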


In a similar way, we may obtain the more well-known short exact sequence of  pure braid groups.
\begin{equation}
1 \to P_k(\D_n)  \to P_{k+n}
\to P_n \to 1.\tag{PB}
\end{equation}
Here $ P_k(\D_n)$ denotes the fundamental group of $\FF_k(\D_n)$, which is isomorphic to the subgroup of $P_{k+n}$ consisting of pure braids where the last $n$ strands are vertical.  Notice that the short exact sequences $(MB)$ and $(PB)$ split for all $k \ge1$, where the section is given geometrically by adding $k$ trivial strands `at infinity' (see for instance \cite{HK,B}).


\subsection{Linear representations for the braid group $B_n$} \label{section2.3}


When $k=1$, the short exact sequence $(PB)$ plays a central r\^ole in the study of Vassiliev invariants of braid groups and of Lie Algebras related to pure braid groups. We refer to \cite{Pap} for the classical case and to \cite{BF,EV,GP} for  analogous results in the case of surface braid groups.

In what follows, we will focus on the relevance of such short exact sequences to linear representations of braid groups and their topological generalisations. Let us start with  the case $k=1$. The group $B_n$  may be interpreted as the mapping class group  of $\D_n$~\cite{Bir}. We thus obtain an action of $B_n$ on $\D_n$ that induces an action on $\pi_1(\D_n)$, the latter being isomorphic to the free group $F_n$ on $n$ generators. This action, which is faithful, coincides with the action by conjugation of $B_n$ on $B_1(\D_n)$ defined by the natural section of $(MB)$. In this way, we recover the famous Artin representation of the braid group $B_n$ as a subgroup of the group of automorphisms of $F_n$.  Analogously, we have an action of $P_n$ as the pure mapping class group of $\D_n$  on $P_1(\D_n) \simeq F_n$ what is faithful and coincides with the action by conjugation of $P_n$ on $P_1(\D_n)$ defined by the natural section of $(PB)$.
Composing the Artin representation with the Magnus representation associated to the length function $p_1: B_1(\D_n) \to \Z$ (see for instance \cite{Bar})
we obtain the (non reduced) Burau representation of $B_n$.
In the case of the pure braid group, we obtain the Gassner representation of $P_n$ (\cite{Bar}) in a similar way.

The Burau representation also has a homological interpretation (see for instance Chapter 3 of \cite{KT}). Furthermore, it admits certain generalisations. Indeed, for any $k \ge 1$ we may observe that $B_n$, regarded as the  mapping class group  of $\D_n$,  acts on $\FF_k(\D_n)/S_{k}$ and therefore on its fundamental group, $B_k(\D_n)$.
The induced action of $B_n$ on  $B_k(\D_n)$ coincides with the action by conjugation of $B_n$ on $B_k(\D_n)$
 defined by the natural section of $(MB)$. In order to look for (linear) representations, we consider regular coverings associated with normal subgroups of $B_k(\D_n)$, and we try to see if the induced action on homology is well defined. In other words, we wish to study surjections of $B_k(\D_n)$ onto a group $G_k$ subject to certain constraints. 

When $k=1$, we consider as before the length function $p_1: B_1(\D_n) \to G_1=\Z=\langle  t \rangle$. Since the action of $B_n$ on 
$B_1(\D_n)$ commutes with $p_1:  B_1(\D_n) \to \Z$, $B_n$ acts on the regular covering 
$\widetilde{\D}_n$ of $\D_n$. The induced action on the first homology group of $\widetilde{\D}_n$ is the (reduced)  Burau representation of $B_n$. 
 
For $k>1$,  let  $G_k$ be  the group $\Z^2=\langle q, t\rangle$. The corresponding morphism $p_k: B_k(\D_n) \to G_k$ for $k>1$  sends the classical braid generators $\sigma_1, \ldots, \sigma_k$ to $q$ and the generators $\zeta_1, \ldots, \zeta_n$, corresponding to the generators of $\pi_1(\D_n)$, to $t$. Since the action of $B_n$ on 
$B_k(\D_n)$ commutes with $p_k:  B_k(\D_n) \to G_k$, it turns out that $B_n$ acts on the regular covering of
 $\FF_k(\D_n)/S_k$, and the induced action on the Borel-Moore middle homology group of such a covering space is in fact the $k$th Bigelow-Krammer-Lawrence representation of $B_n$. In this way,  for $k>1$ we obtain faithful linear representations of $B_n$ (see \cite{Big} for $k=2$ and \cite{Z} for $k>2$). We refer the reader to \cite{KT} for a complete description of these constructions. In what follows, we first motivate the choice of the above projections $p_k: B_k(\D_n) \to G_k$ using the lower central series of the corresponding groups. We then explain how the study of the lower central series of surface braid groups can be used to obtain the representations given  in \cite{HK}. We will define also the notion of extension of a representation in a completely algebraic manner, and its obstructions.

If we wish to consider surjections of $B_k(\D_n)$ onto some group $G'_k$,
in order to obtain a linear representation using the approach described above, the group $G'_k$ should be Abelian. 
Considering the short exact sequence $(MB)$ on the level of Abelianisation, we obtain 
the following commutative diagram of short exact sequences:
\begin{equation}
\begin{xy}*!C\xybox{%
\xymatrix{
1 \ar[r]  &  B_k(\D_n)  \ar[d]^{\bar q_k}   \ar[r]    & B_{k,n}  \ar[d]^{r_{k,n}} \ar[r]          &  B_n   \ar[d]^{r_{n}} \ar[r]        & 1\\
1 \ar[r]  &  \ker \bar\psi_k  \ar[r]                          & B_{k,n}/ \Gamma_2(B_{k,n})  \ar[r] &  B_n/ \Gamma_2(B_n)  \ar[r]   & 1}}
\end{xy}
\end{equation}
where $r_{k,n}$ and $r_n$ are Abelianisation homomorphisms, and $\bar\psi_k$ is the homomorphism satisfying 
$\bar\psi_k \circ r_{k,n}=r_n \circ \psi_k$. It is straightforward to show that for all $k\ge 1$, $ \ker \bar\psi_k$ and $\bar{q}_k$ coincide respectively with the group $G_k$ and the morphism $p_k$ considered in the Bigelow-Krammer-Lawrence representations.


 \section{Exact sequences for surface braid groups} \label{section3}


We now return to the general case. Let $\Sigma$ be an orientable surface.
The map $\FF_{k+n}(\Sigma)/(S_k \times S_n) \to \FF_n(\Sigma)/S_n$
given by forgetting the first $k$ coordinates is a locally-trivial
fibration whose fibre may be identified with
$\FF_k(\Sigma \setminus \sset{x_{1}}{x_{n}} )/S_{k}$.
As in the case of pure braid groups~\cite{FaN}, the long exact
sequence in homotopy of this fibration yields a short exact sequence.

\begin{lem} 
Let $k,n\in \mathbb{N}$. The Fadell-Neuwirth fibration
$\FF_{k+n}(\Sigma) \to \FF_{n}(\Sigma)$
induces the short exact sequence:
\begin{equation}\label{eq:sequence}
1 \to B_k(\Sigma \setminus \sset{x_{1}}{x_{n}}) \to \MB{k}{\Sigma}
\to B_n(\Sigma) \to 1 \,,\tag{MSB}
\end{equation}
where we suppose that $n\geq 3$ if $\Sigma=\mathbb{S}^2$.
\end{lem}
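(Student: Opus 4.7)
The plan is to mimic the proof of the classical sequence $(MB)$ from the previous section by applying the long exact sequence in homotopy of a fibration and extracting its three-term tail. First I would verify that the map $\FF_{k+n}(\Sigma)/(S_k \times S_n) \to \FF_n(\Sigma)/S_n$, obtained by descending the Fadell-Neuwirth projection $\FF_{k+n}(\Sigma) \to \FF_n(\Sigma)$ (forgetting the first $k$ coordinates) to orbit spaces, is itself a locally trivial fibration. The group $S_k \times S_n$ acts freely on the source, and the projection is $(S_k \times S_n)$-equivariant with the $S_k$-factor acting trivially on the target; local trivialisations of the classical fibration therefore descend to local trivialisations of the quotient map, and the fibre over $\{x_1,\ldots,x_n\}$ is identified with $\FF_k(\Sigma \setminus \sset{x_1}{x_n})/S_k$, whose fundamental group is by definition $B_k(\Sigma \setminus \sset{x_1}{x_n})$.

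Next I would write the tail of the induced long exact sequence of homotopy groups,
\[
\pi_2(\FF_n(\Sigma)/S_n) \to B_k(\Sigma \setminus \sset{x_1}{x_n}) \to \MB{k}{\Sigma} \to B_n(\Sigma) \to \pi_0\bigl(\FF_k(\Sigma \setminus \sset{x_1}{x_n})/S_k\bigr).
\]
Path-connectedness of the fibre (quotient of a connected space by a finite group) yields surjectivity onto $B_n(\Sigma)$, and the injectivity required on the left reduces to showing that $\pi_2(\FF_n(\Sigma)/S_n)$ vanishes.

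The main and only delicate point is this $\pi_2$-vanishing. Since $\FF_n(\Sigma) \to \FF_n(\Sigma)/S_n$ is a finite covering, $\pi_2(\FF_n(\Sigma)/S_n) \cong \pi_2(\FF_n(\Sigma))$, so it suffices to check asphericity of $\FF_n(\Sigma)$. This is the classical fact proved by induction on $n$ via the Fadell-Neuwirth fibrations, whose fibres $\Sigma \setminus \sset{x_1}{x_{n-1}}$ are aspherical because they have free fundamental group. The induction goes through unconditionally when $\Sigma$ is an orientable surface of positive genus. When $\Sigma = \mathbb{S}^2$ the base case of the induction fails in general, but by the theorem of Fadell-Van Buskirk the asphericity of $\FF_n(\mathbb{S}^2)$ nonetheless holds as soon as $n \geq 3$, which is exactly the hypothesis appearing in the statement; for $n \leq 2$ the space $\FF_n(\mathbb{S}^2)$ is homotopy-equivalent to $\mathbb{S}^2$ itself and has $\pi_2 = \Z$, so the sequence would fail in those cases.
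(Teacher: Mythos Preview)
Your proposal is correct and follows exactly the approach the paper indicates: the paper does not give a detailed proof of this lemma but simply states that, as in the pure case of Fadell--Neuwirth~\cite{FaN}, the long exact sequence in homotopy of the fibration yields the short exact sequence, and you have supplied precisely those standard details (including the asphericity argument and the Fadell--Van~Buskirk input for the case $\Sigma=\mathbb{S}^2$, $n\geq 3$). One minor wording point: the punctured fibres are aspherical not merely \emph{because} their fundamental group is free, but because an open surface deformation retracts onto a graph; the conclusion is of course the same.
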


In what follows we shall refer to the above short exact sequence
as~(\ref{eq:sequence}) (mixed surface braid groups sequence), and we denote its
restriction to the corresponding pure braid groups
\begin{equation}\label{eq:psequence}
1 \to P_k(\Sigma \setminus \sset{x_{1}}{x_{n}}) \to P_{k+n}(\Sigma)
\to P_n(\Sigma) \to 1 \,,\tag{SPB}
\end{equation}
by~(\ref{eq:psequence}) (surface pure braid groups sequence). If $\Sigma$ is
the disc $\D^2$, we recover the sequence considered in the previous
section that gives rise to the Bigelow-Krammer Lawrence representation. In a similar manner, we may
study~(\ref{eq:sequence}) in order to find representations of
$B_n(\Sigma)$.

We have the following commutative diagram involving the short exact
sequences~(\ref{eq:permutation}),~(\ref{eq:psequence})
and~(\ref{eq:sequence}):
\begin{equation} \label{eq:commsequence1}
\begin{xy}*!C\xybox{%
\xymatrix{%
& 1\ar[d] & 1\ar[d] & 1\ar[d] & \\
1 \ar[r] & P_{k}(\Sigma \setminus \sset{x_{1}}{x_{n}}) \ar[r]\ar[d] &
P_{k+n}(\Sigma) \ar[r]\ar[d]  & P_{n}(\Sigma) \ar[d] \ar[r] & 1 \\
1 \ar[r] & B_k(\Sigma \setminus \sset{x_{1}}{x_{n}}) \ar[r]\ar[d] &
\MB{k}{\Sigma} \ar[r] \ar[d] &  B_n(\Sigma) \ar[r]\ar[d] & 1\\
1 \ar[r] & S_{k} \ar[d]\ar[r] & S_{k}\times S_{n} \ar[d]\ar[r]  &
S_{n} \ar[d]\ar[r] & 1\\
& 1 & 1 & 1 &
}}
\end{xy}
\end{equation}
where the vertical arrows between~(\ref{eq:psequence})
and~(\ref{eq:sequence}) are inclusions, and the second vertical
sequence is obtained by restricting the exact sequence~(\ref{eq:permutation}) for $k+n$
strings to $\MB{k}{\Sigma} $. The third row of symmetric groups splits
as a direct product.

The following lemma summarises some of the known
results for the splitting problem for the sequences~(\ref{eq:psequence})
and~(\ref{eq:sequence}) (we refer to \cite{FaV,GG4} for the case of  $\mathbb{S}^2$.
\begin{lem} \label{lem:fibrations}
Let $\Sigma$ be a compact, connected orientable surface different from
$\mathbb{S}^2$.
\begin{enumerate}[(a)]
\item Suppose that $\Sigma$ has empty boundary.
\begin{enumerate}[(i)]
\item If $\Sigma$ is the $2$-torus $\mathbb{T}^2$ 
then~(\ref{eq:psequence}) splits for all $k,n\in\N$.
\item If $n=1$ then both~(\ref{eq:psequence}) and~(\ref{eq:sequence})
split for all $k\in\N$.
\item Let $n\geq 2$ and $k\in\N$. If $\Sigma\neq \mathbb{T}^2$,
then~(\ref{eq:psequence}) 
does not split. If further
$k=1$ then~(\ref{eq:sequence}) does not split.
\end{enumerate}
\item If $\Sigma$ has non-empty boundary then~(\ref{eq:psequence})
and~(\ref{eq:sequence}) split for all $k,n\in \mathbb{N}$.
\end{enumerate}
\end{lem}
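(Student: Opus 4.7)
The strategy is to handle the splittings (part (b), (a)(i), (a)(ii)) by exhibiting explicit sections, and to treat the non-splittings of (a)(iii) by invoking the obstruction arguments of Gon\c{c}alves--Guaschi. Since most assertions are classical, my plan is to give a uniform geometric construction for part (b), to specialise known tricks for the closed-surface splittings (a)(i)--(a)(ii), and to cite the non-splitting results in (a)(iii).

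For part (b), fix a collar neighbourhood $U \cong \partial \Sigma \times [0,1)$ of some boundary component of $\Sigma$, and pick $k$ pairwise distinct points $y_1, \ldots, y_k \in U$. Given a configuration $(x_1, \ldots, x_n) \in \FF_n(\Sigma)$, a continuous isotopy supported in $U$ and depending continuously on the $x_i$ pushes the $x_i$ out of a smaller subcollar, so that the assignment $(x_1, \ldots, x_n) \mapsto (y_1, \ldots, y_k, x_1, \ldots, x_n)$ defines a topological section of $\FF_{k+n}(\Sigma) \to \FF_n(\Sigma)$, whence a splitting of~(\ref{eq:psequence}); the $S_k\times S_n$-equivariance of the construction yields a splitting of~(\ref{eq:sequence}) as well. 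The closed-surface splittings of (a)(i) and (a)(ii) require substitutes for the boundary trick: for (a)(ii) with $n = 1$, an algebraic section $\pi_1(\Sigma) \to P_{1+k}(\Sigma)$ is obtained, following Birman, by dragging a single moving strand around tubular neighbourhoods of a fixed system of generating loops of $\pi_1(\Sigma)$ chosen to avoid $k$ fixed auxiliary points, and the mixed sequence then splits through the inclusion $P_{1+k}(\Sigma) \hookrightarrow \MB{k}{\Sigma}$; for (a)(i) with $\Sigma = \mathbb{T}^2$, the abelian group structure together with the Fadell--Neuwirth presentation of $P_{n+k}(\mathbb{T}^2)$ exhibits the required splitting.

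The genuine obstacle is (a)(iii). For~(\ref{eq:psequence}) with $n \geq 2$ and $\Sigma$ closed of genus $g \geq 2$, I plan to invoke the non-splitting theorem of Gon\c{c}alves--Guaschi: any algebraic section would furnish a compatible family of lifts in the fibre $P_k(\Sigma\setminus\sset{x_1}{x_n})$ of all the generators of $P_n(\Sigma)$, but the ``surface relation'' in $P_n(\Sigma)$ would then force on those lifts a further relation whose consistency is governed by the Euler characteristic of $\Sigma$ and fails whenever $g \geq 2$. The second assertion of (a)(iii), that~(\ref{eq:sequence}) does not split for $k = 1$, follows from the observation that $S_k = S_1 = 1$: any putative section $s\colon B_n(\Sigma) \to \MB{1}{\Sigma}$ must send the pure subgroup $P_n(\Sigma)$ into $P_{n+1}(\Sigma) \subset \MB{1}{\Sigma}$, so it restricts to a section of~(\ref{eq:psequence}), contradicting the pure case. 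The main obstacle throughout is precisely this Gon\c{c}alves--Guaschi obstruction step, which I propose to cite in full rather than rederive.
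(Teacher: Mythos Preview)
Your proposal is correct and follows the same architecture as the paper: both cite Gon\c{c}alves--Guaschi for the pure non-splitting in (a)(iii), both deduce the mixed non-splitting for $k=1$ by restricting a putative section to the pure subgroup (using $S_1=\{1\}$ so that the image lands in $P_{n+1}(\Sigma)$), and both handle (a)(i)--(a)(ii) by invoking the standard sources (Fadell--Neuwirth together with the group structure/non-vanishing vector field on $\mathbb{T}^2$, and a section of the pure sequence combined with the identification $P_1(\Sigma)=B_1(\Sigma)$).

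The one genuine difference is in part~(b). You construct the section directly via a collar: shrink the surface away from the boundary and place $k$ fixed points in the vacated collar, noting that the construction is $S_k\times S_n$-equivariant so that~(\ref{eq:sequence}) splits as well. The paper instead deletes a boundary component to obtain a surface $\Sigma'$ homeomorphic to a compact surface with a point removed, observes that $\Sigma$ and $\Sigma'$ (hence their configuration spaces) are homotopy equivalent, and then appeals to the methods of~\cite{GG1} for such punctured surfaces. Your route is more self-contained and makes the equivariance visible; the paper's route is terser but defers the actual construction to the reference. One minor expository point: in your collar argument the map you write down, $(x_1,\ldots,x_n)\mapsto(y_1,\ldots,y_k,x_1,\ldots,x_n)$, is only a section after composing with the shrinking isotopy (i.e.\ up to homotopy); you should either record the pushed images $\phi(x_i)$ explicitly or remark that a section up to homotopy suffices at the level of $\pi_1$.
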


\begin{proof}

\begin{enumerate}
\item
\begin{enumerate}
\item The statement is a consequence of~\cite{FaN} and the fact that
$\mathbb{T}^2$  admits a non-vanishing vector field.

\item Suppose that $n=1$. Then~(\ref{eq:psequence}) splits using
\cite{GG1} in the orientable case. The fact that the upper right-hand vertical arrow
$P_{1}(\Sigma) \to B_{1}(\Sigma)$ of the
diagram~(\ref{eq:commsequence1}) is the identity yields a section for
$B_{k, 1}(\Sigma) \to  B_1(\Sigma)$.

\item For~(\ref{eq:psequence}), this follows from~\cite{GG1}. Next,
let $k=1$, suppose that~(\ref{eq:sequence}) splits, and let $s$ be a
section for $\MB{1}{\Sigma} \to  B_n(\Sigma)$. For $m\in\N$, let
$\tau_{m}: B_{m}(\Sigma) \to S_{m}$ denote the usual permutation
homomorphism that appears in the short exact
sequence~(\ref{eq:permutation}). If $x\in P_{n}(\Sigma)$ then
$\tau_{n}(x)=1$, and so $\tau_{1+n}(s(x))=1$ by commutativity of the
diagram~(\ref{eq:commsequence}) and the fact that $k=1$ (by abuse of
notation, $\tau_{1+n}$ also denotes the restriction of $\tau_{1+n}$ to
$\MB{1}{\Sigma}$). Hence $s(x)\in P_{1+n}(\Sigma)$. Since
$P_{1+n}(\Sigma) \to P_{n}(\Sigma)$ is the restriction of
$\MB{1}{\Sigma} \to  B_n(\Sigma)$ to $P_{1+n}(\Sigma)$, the
restriction of $s$ to $P_{n}(\Sigma)$ gives rise to a section
for~(\ref{eq:psequence}), and so we obtain a contradiction.
\end{enumerate}
\item Suppose that $\Sigma$ has non-empty boundary, and let $C$ be a
boundary component of $\Sigma$. Then $\Sigma'=\Sigma\setminus C$ is
homeomorphic to a compact surface with a single point deleted. The
fact that $\Sigma$ and $\Sigma'$ are homotopy equivalent implies that
their configuration spaces are also homotopy equivalent, and hence the
pure braid groups (resp.\ braid groups, mixed braid groups) of
$\Sigma$ are isomorphic to the corresponding pure braid groups (resp.\
braid groups, mixed braid groups) of $\Sigma'$. Applying the methods
of~\cite{GG1}, the short exact sequences~(\ref{eq:psequence})
and~(\ref{eq:sequence}) split for $\Sigma'$, and so split for
$\Sigma$.\qedhere
\end{enumerate}
\end{proof}

Taking into account the above 
discussion concerning the existence of braid group representations via
the induced action on first homology, this lemma indicates that one
might use~(\ref{eq:sequence}) to look for representations of surface
braid groups in the case where the boundary is non empty.



\section{Lower central series  for surface braid groups} \label{section4}



\subsection{Definitions and known results}


In this section we will give some results on the lower central series of (mixed) surface braid groups that will turn out useful in the
next section when we come to study their representations. Given a group $G$, we recall that the \emph{lower central series} of $G$
is the filtration $G =\Gamma_1(G) \supseteq \Gamma_2(G) \supseteq \cdots$, 
where $\Gamma_i(G)=[G,\Gamma_{i-1}(G)]$ for $i\geq 2$. The group $G$ is said to be
\emph{perfect} if $G=\Gamma_2(G)$. 
Following P.~Hall, for a
group-theoretic property $\mathcal{P}$, a group $G$ is said to be
\emph{residually $\mathcal{P}$} if for any (non-trivial) element $x$
in $G$, there exists a group $H$ with the property  $\mathcal{P}$ and
a surjective homomorphism $\phi: G \to H$ such that  $\phi(x) \not=1$. It is well known that a group $G$
is residually nilpotent if and only if
$\bigcap_{i \ge 1}\Gamma_i(G)=\{ 1\}$. 

In what follows, we denote a compact, connected orientable surface
with one boundary component of positive genus $g$ by $\surf{g}{}$. The surface
 $\surf{g}{} \setminus \sset{x_{1}}{x_{n}}$ will be denoted by $\surf{g}{, n}$. We will focus on  $\surf{g}{}$ since, according to \relem{fibrations}, for these particular surfaces the sequence (MSB) splits.
The following result is well known
(see~\cite{BGG, GL} for instance).

\begin{prop} \label{braidlcs}
Let $B_n$ be the Artin braid group on $n\ge 3$ strands. Then $\Gamma_1(B_n)/\Gamma_2(B_n) \cong \Z$ and $\Gamma_2(B_n)=\Gamma_3(B_n)$.
\end{prop}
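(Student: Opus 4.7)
The plan is to read off both statements from the standard Artin presentation
\[
B_n = \setang{\sigma_1,\dots,\sigma_{n-1}}{\sigma_i\sigma_j=\sigma_j\sigma_i\ (|i-j|\ge 2),\ \sigma_i\sigma_{i+1}\sigma_i=\sigma_{i+1}\sigma_i\sigma_{i+1}}.
\]
For the first claim, I would abelianise directly. The commuting relations become trivial, while each braid relation $\sigma_i\sigma_{i+1}\sigma_i=\sigma_{i+1}\sigma_i\sigma_{i+1}$ collapses after cancellation to $\sigma_i=\sigma_{i+1}$ in $B_n/\Gamma_2(B_n)$. Hence all generators are identified with a single element $t$, no further relation survives, and $B_n/\Gamma_2(B_n)\cong \Z=\ang{t}$.

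For the second claim, the cleanest route is to invoke the general fact that, for any group $G$, the commutator map induces a surjective homomorphism
\[
G^{ab}\wedge G^{ab}\twoheadrightarrow \Gamma_2(G)/\Gamma_3(G),\qquad [a]\wedge [b]\longmapsto \overline{[a,b]},
\]
obtained from the observations that $\Gamma_2(G)/\Gamma_3(G)$ is central in $G/\Gamma_3(G)$ and that the identities $[xy,z]\equiv [x,z][y,z]$ and $[x,y]\equiv [y,x]^{-1}$ hold modulo $\Gamma_3(G)$, so the bracket is bilinear and alternating on the abelianisation. Applying this to $G=B_n$ and using the first part, one has $B_n^{ab}\wedge B_n^{ab}\cong \Z\wedge\Z=0$, and therefore $\Gamma_2(B_n)/\Gamma_3(B_n)=0$, i.e.\ $\Gamma_2(B_n)=\Gamma_3(B_n)$.

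If one prefers a hands-on argument rather than appealing to the exterior square, I would instead compute directly in the class-$2$ group $B_n/\Gamma_3(B_n)$. Writing $x_i$ for the image of $\sigma_i$ and $c_i=[x_i,x_{i+1}]$, the braid relation rewrites in class $2$ (using $ba=ab\,c_i^{-1}$ and the centrality of $c_i$) as $x_ix_{i+1}^{-1}=c_i$. For $n\ge 4$ one then computes
\[
c_i=[x_i,x_{i+1}]=[x_i,x_{i+1}x_{i+2}^{-1}]=[x_i,c_{i+1}]\in [G,\Gamma_2]\subseteq\Gamma_3,
\]
using bilinearity together with $[x_i,x_{i+2}]=1$, hence $c_i$ is trivial modulo $\Gamma_3$. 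For $n=3$ the same identity $x_1x_2^{-1}=c_1$ shows that $B_3/\Gamma_3(B_3)$ is generated by $x_2$ and the central element $c_1$, hence is abelian, forcing $c_1=1$. In either presentation of the argument the only step that requires real care is justifying the class-$2$ commutator manipulations; the $n=3$ case is the mildly delicate one, since no commuting relation is available, but the two-generator trick handles it.
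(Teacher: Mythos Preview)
The paper does not actually prove this proposition: it is stated as well known, with a reference to \cite{BGG,GL}. So there is no in-paper argument to compare against, and your task is only to give a clean self-contained proof.

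Your main argument is correct and is the standard one. Abelianising the Artin presentation identifies all the $\sigma_i$ and gives $B_n^{\mathrm{ab}}\cong\Z$. The surjection $G^{\mathrm{ab}}\wedge G^{\mathrm{ab}}\twoheadrightarrow \Gamma_2(G)/\Gamma_3(G)$ is a genuine fact (it is the degree-$2$ piece of the map from the free Lie algebra on $G^{\mathrm{ab}}$ to the associated graded Lie algebra of $G$), and since $\Z\wedge\Z=0$ this forces $\Gamma_2(B_n)=\Gamma_3(B_n)$. Nothing more is needed.

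Your ``hands-on'' alternative contains a small computational slip that actually makes the argument easier, not harder. In $B_n/\Gamma_3(B_n)$, with $c_i=[x_i,x_{i+1}]$ central, the braid relation $x_ix_{i+1}x_i=x_{i+1}x_ix_{i+1}$ becomes
\[
x_i^2x_{i+1}c_i^{-1}=x_ix_{i+1}^2c_i^{-1},
\]
hence $x_i=x_{i+1}$ outright, not $x_ix_{i+1}^{-1}=c_i$. So all $x_i$ coincide in $B_n/\Gamma_3(B_n)$, that quotient is cyclic, hence abelian, and $\Gamma_2(B_n)\subseteq\Gamma_3(B_n)$ follows immediately for every $n\ge3$ without the separate $n\ge4$ trick or the $n=3$ two-generator workaround. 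Your conclusion is right; just correct the intermediate line and drop the now-unnecessary case split.
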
 

The case of braid groups of orientable surfaces of genus at least one, 
is much richer (the cases of the sphere $\mathbb{S}^2$ and the punctured sphere were studied in~\cite{GG2}): related statements  for $\surf{g}{}$ may be summarised as follows (see~\cite{BB2, BGG, GL} for similar results for the other orientable surfaces).

\begin{thm}(\cite{BGG})\label{th:ggeq2}
Let $g\geq 1$ and $n\geq 3$. Then:
\begin{enumerate}[(a)]
\item\label{it:g1sigma} $\Gamma_1(B_n(\surf{g}{}))/ \Gamma_2(B_n(\surf{g}{}))=\Z^{2g} \oplus \Z_2$.
\item\label{it:g2sigma} $\Gamma_2(B_n(\surf{g}{}))/ \Gamma_3(B_n(\surf{g}{}))=\Z$.
\item\label{it:g3sigma} $\Gamma_3(B_n(\surf{g}{}))= \Gamma_4(B_n(\surf{g}{}))$. Moreover
$\Gamma_3(B_n(\surf{g}{}))$ is perfect for $n\ge 5$.
\item\label{it:resid} $B_n(\surf{g}{})$ is not residually nilpotent.
\end{enumerate}
\end{thm}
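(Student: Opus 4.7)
The plan is to deduce all four statements from an explicit finite presentation of $G:=B_n(\surf{g}{})$ (for instance the one used in~\cite{BGG}), whose generators are the braid generators $\sigma_1,\ldots,\sigma_{n-1}$ together with surface generators $a_1,b_1,\ldots,a_g,b_g$ (one symplectic pair per handle), and whose relations split into the classical braid relations, the commutation of every $a_r,b_r$ with every $\sigma_j$ for $j\ge 2$, and a short list of mixed relations built from $\sigma_1$ and the $a_r,b_r$. For part~(a) I would abelianise the presentation directly: the braid relations identify all $\sigma_i$ with a single class $\sigma$; the surface generators remain free, contributing $\Z^{2g}$; and exactly one of the mixed relations (of the form $\sigma_1^{-1}a_r\sigma_1\cdot b_r=b_r\cdot\sigma_1^{-1}a_r\sigma_1^{-1}$, or an equivalent variant) survives abelianisation as $2\sigma=0$, yielding the $\Z_2$-summand.

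For part~(b) I would work inside the two-step nilpotent quotient $G/\Gamma_3(G)$. Since $\Gamma_2/\Gamma_3$ is central in $G/\Gamma_3$, the commutator descends to a skew-symmetric $\Z$-bilinear pairing $G^{\mathrm{ab}}\otimes G^{\mathrm{ab}}\to\Gamma_2/\Gamma_3$, so that $\Gamma_2/\Gamma_3$ is generated by the classes $[x,y]$ as $x,y$ run through the generators of $G$. The braid relations give $[\sigma_i,\sigma_j]\in\Gamma_3$, the commutation relations kill $[\sigma_i,a_r]$ and $[\sigma_i,b_r]$ for $i\ge 2$, and the mixed relations, rewritten modulo $\Gamma_3$, force the off-diagonal commutators $[a_r,a_s],[b_r,b_s],[a_r,b_s]$ (for $r\ne s$) into $\Gamma_3$ and identify the diagonal classes $[a_r,b_r]$ with one another and with an integer multiple of $[\sigma_1,a_1]$. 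This leaves at most one cyclic generator; to rule out torsion I would exhibit a non-trivial $\Z$-valued invariant of the class of $[a_1,b_1]$, for instance via a homomorphism from $G$ into a suitable free nilpotent quotient built from the presentation.

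For part~(c), once $G/\Gamma_3$ is controlled, a short list of generators of $\Gamma_2$ descends to a short list of generators of $\Gamma_3/\Gamma_4$, and I would check by iterated Hall--Witt identities applied to the mixed relations that each such generator already lies in $\Gamma_4$. The hypothesis $n\ge 5$ enters only for perfectness: one must realise every generator of $\Gamma_3$ as a product of commutators of elements that already belong to $\Gamma_3$, and this requires sufficiently many strand-indices to feed the necessary Jacobi-type identities among commutators. Part~(d) then follows formally: from $\Gamma_3=\Gamma_4$ one gets $\Gamma_i(G)=\Gamma_3(G)$ for every $i\ge 3$ by induction, hence $\bigcap_{i\ge 1}\Gamma_i(G)=\Gamma_3(G)$; this intersection is non-trivial because $G$ contains the free group $\pi_1(\surf{g}{,n-1})\cong F_{2g+n-1}$ of rank at least two (embedded in $P_n(\surf{g}{})\subset G$ via~(SPB)), so $G$ is not nilpotent and in particular $\Gamma_3(G)\ne 1$.

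The main obstacle will be part~(c). Parts~(a) and~(b) reduce to linear algebra over $\Z$ once the presentation is in hand, and~(d) is a formal consequence of~(c) together with the easy non-nilpotence observation. By contrast, both the equality $\Gamma_3=\Gamma_4$ and the perfectness of $\Gamma_3$ for $n\ge 5$ demand a careful commutator calculus in which the Hall--Witt identity must interact with the non-trivial mixed relations in exactly the right way; the witnesses certifying perfectness in particular must be chosen so as to absorb the obstructions that prevent the same decomposition for smaller $n$.
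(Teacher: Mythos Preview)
The paper does not supply its own proof of this theorem: it is quoted verbatim from~\cite{BGG} and stated without argument. So there is no in-paper proof to compare against; what the paper \emph{does} contain is the presentation of $\bsurf{n}{g}{}$ (Theorem~\ref{th:presbng}) and the presentation of $\bsurf{n}{g}{}/\Gamma_3(\bsurf{n}{g}{})$ (Lemma~\ref{lem:presbng}, Corollary~\ref{cor:decgamma}), both also cited from~\cite{BGG}.

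Your plan is in line with the strategy of~\cite{BGG}. Part~(a) is exactly the abelianisation of the presentation in Theorem~\ref{th:presbng}: relation~\reqref{abbag} is the one that survives as $2\sigma=0$. For part~(b), note that Lemma~\ref{lem:presbng} and Corollary~\ref{cor:decgamma} already hand you $G/\Gamma_3$ explicitly as a central $\Z$-extension of $\Z^{2g}$, with $\sigma$ of infinite order and $[a_i,b_i]=\sigma^2$; comparing with $G^{\mathrm{ab}}=\Z^{2g}\oplus\Z_2$ immediately gives $\Gamma_2/\Gamma_3=\langle\sigma^2\rangle\cong\Z$, so your ``non-trivial $\Z$-valued invariant'' is simply the $\sigma$-exponent in the normal form of Corollary~\ref{cor:decgamma}. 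Part~(d) is exactly as you say.

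Where your proposal remains a genuine sketch is part~(c), and you are right that this is the substantial step. The equality $\Gamma_3=\Gamma_4$ is not a formal consequence of Hall--Witt manipulations alone: one needs the specific mixed relations~\reqref{asjg}--\reqref{cddcg} to force each triple commutator of generators into $\Gamma_4$, and the argument in~\cite{BGG} proceeds by an explicit case analysis of these. The perfectness claim for $n\ge5$ is a separate and harder computation (producing, for each generator of $\Gamma_3$, an explicit expression as a product of commutators in $\Gamma_3$), and your proposal does not yet indicate how the extra strand-indices are actually used. If you intend to write this out rather than cite~\cite{BGG}, that is where the real work lies.
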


 
\subsection{Group presentations for surface braid groups and their $3$-commutator groups}
 

\begin{thm}(\cite{BGG})\label{th:presbng}
Let $n\geq 1$. The group~$\bsurf{n}{g}{}$ admits the following group presentation:

\noindent \textbf{Generators:}  $a_1, b_1, \ldots, a_g, b_g, \sigma_1, \ldots, \sigma_{n-1}$.

\noindent \textbf{Relations:} 
\begin{gather}
\text{$\sigma_i\sigma_j=\sigma_j\sigma_i$ if $\lvert i-j \rvert \geq
2$}\label{eq:artin1}\\
\text{$\sigma_i\sigma_{i+1}\sigma_i= \sigma_{i+1}\sigma_i \sigma_{i+1}$
for all $1\leq i\leq n-2$}\label{eq:artin2}\\
\text{$c_i\sigma_j= \sigma_j c_i$ for all $j\geq 2$,   $c_i=a_i$ or
  $b_i$ and $i=1, \ldots, g$}\label{eq:asjg}\\
\text{$c_i \sigma_1 c_i \sigma_1= \sigma_1 c_i \sigma_1 c_i$   for
  $c_i=a_i$ or $b_i$ and $i=1, \ldots, g$}\label{eq:bsbg}\\
\text{$a_i \sigma_1 b_i = \sigma_1 b_i \sigma_1 a_i \sigma_1$ for  $i=1, \ldots, g$}\label{eq:abbag}\\
\text{$c_i \sigma_1^{-1}  c_j \sigma_1=\sigma_1^{-1} c_j \sigma_1 c_i$
  for $c_i=a_i$ or $b_i$, $c_j=a_j$ or $b_j$ and $1\le j<i\le g$}\label{eq:cddcg}
\end{gather}
\end{thm}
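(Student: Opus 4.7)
The plan is to verify the presentation in two stages: first, realise the stated generators inside $\bsurf{n}{g}{}$ and check that each relation holds, thereby producing a surjection from the abstract presented group; second, establish completeness by induction on $n$ using the split exact sequence of \relem{fibrations}(b).

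\emph{Geometric realisation.} Model $\widehat{\Sigma}_g$ as a $4g$-gon with the standard side identifications and one small open disc removed near a vertex, so that the boundary is a single circle. Place basepoints $P_1, \ldots, P_n$ along a horizontal segment close to the boundary. Interpret $\sigma_i$ as the classical Artin half-twist exchanging $P_i$ and $P_{i+1}$ inside a disc containing no other basepoint, and interpret $a_i, b_i$ as the braid in which only the first strand moves, tracing the $i$-th pair of standard generating loops of $\pi_1(\widehat{\Sigma}_g, P_1)$. The braid relations \reqref{artin1} and \reqref{artin2} are classical; \reqref{asjg} holds because the loops carrying $a_i, b_i$ are disjoint from the discs supporting $\sigma_j$ for $j \ge 2$; and the three mixed relations \reqref{bsbg}, \reqref{abbag}, and \reqref{cddcg} each admit short pictorial proofs describing how handle-loops carried by the first strand interact with the crossing $\sigma_1$. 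A standard cellular argument on configuration space shows that these elements generate $\bsurf{n}{g}{}$, so we obtain a surjective homomorphism $\varphi \colon G \twoheadrightarrow \bsurf{n}{g}{}$ from the abstract group $G$ defined by the presentation.

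\emph{Completeness by induction on $n$.} The base case $n=1$ is immediate: all relations involving any $\sigma_i$ become vacuous, so $G$ collapses to the free group on $\{a_1, b_1, \ldots, a_g, b_g\}$, which matches $B_1(\widehat{\Sigma}_g) = \pi_1(\widehat{\Sigma}_g)$. For the inductive step, \relem{fibrations}(b) supplies the split short exact sequence
\begin{equation*}
1 \to \pi_1(\widehat{\Sigma}_{g,n-1}) \to B_{1,n-1}(\widehat{\Sigma}_g) \to B_{n-1}(\widehat{\Sigma}_g) \to 1,
\end{equation*}
whose kernel is free of rank $2g+n-1$, generated by the $a_i, b_i$ and by the puncture-loops of the first strand around $P_2,\ldots,P_n$. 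Combining the inductive presentation of $B_{n-1}(\widehat{\Sigma}_g)$ with this semidirect product structure yields a presentation of $B_{1,n-1}(\widehat{\Sigma}_g)$. Since $B_{1,n-1}(\widehat{\Sigma}_g) \le \bsurf{n}{g}{}$ has index $n$ with Schreier transversal $1, \sigma_1, \sigma_1\sigma_2, \ldots, \sigma_1\sigma_2\cdots\sigma_{n-1}$, a Reidemeister--Schreier transfer produces a presentation of $\bsurf{n}{g}{}$ whose extra generator is $\sigma_1$ and whose extra relations are exactly \reqref{artin2} together with \reqref{bsbg}, \reqref{abbag}, and \reqref{cddcg}.

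\emph{Main obstacle.} The delicate point is this final transfer: one must verify that the three mixed relations \reqref{bsbg}, \reqref{abbag}, and \reqref{cddcg} fully capture the conjugation action of $\sigma_1$ on every kernel generator, so that no additional relations are required. The classical braid relations reduce the general case to $n=2$, in which $\sigma_1$ is the only crossing present and each relation may be verified by an explicit pictorial isotopy. This bookkeeping, together with the task of re-expressing the puncture-loops in the kernel as words in the stated generators $\sigma_1,\ldots,\sigma_{n-1}$, is what is carried out in detail in \cite{BGG}, following Bellingeri's strategy for presentations of surface braid groups.
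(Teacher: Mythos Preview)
The paper itself gives no proof of this theorem; it is simply quoted from \cite{BGG} (where in turn the argument goes back to \cite{B}). Your overall strategy---realise the generators geometrically to obtain a surjection $\varphi:G\to\bsurf{n}{g}{}$, then prove injectivity by induction on $n$ using the split Fadell--Neuwirth sequence of \relem{fibrations}(b)---is exactly the standard one used in those references, so in spirit you are reproducing the intended argument.

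There is, however, a genuine logical gap in your inductive step. Reidemeister--Schreier produces a presentation of a finite-index \emph{subgroup} from a presentation of the ambient group, not the other way around. Knowing a presentation of the index-$n$ subgroup $B_{1,n-1}(\widehat{\Sigma}_g)$ does not by itself yield a presentation of $\bsurf{n}{g}{}$: many non-isomorphic groups contain a given group as an index-$n$ subgroup. As written, the sentence ``a Reidemeister--Schreier transfer produces a presentation of $\bsurf{n}{g}{}$'' is therefore unjustified.

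The fix is to run the argument in the abstract group $G$ rather than in $\bsurf{n}{g}{}$. From the relations of $G$ one checks that the assignment $\sigma_i\mapsto (i,i+1)$, $a_j,b_j\mapsto 1$ defines a homomorphism $G\to S_n$; let $H\le G$ be the preimage of the stabiliser of $1$. The transversal $1,\sigma_1,\sigma_1\sigma_2,\ldots,\sigma_1\cdots\sigma_{n-1}$ shows $[G:H]=n$. Now apply Reidemeister--Schreier to $H\le G$: after Tietze simplifications one obtains precisely the semidirect-product presentation of $\pi_1(\widehat{\Sigma}_{g,n-1})\rtimes B_{n-1}(\widehat{\Sigma}_g)$ that you already know (by induction and the split sequence) presents $B_{1,n-1}(\widehat{\Sigma}_g)$. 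Hence $\varphi|_H$ is an isomorphism onto $B_{1,n-1}(\widehat{\Sigma}_g)$, and since $[G:H]=n=[\bsurf{n}{g}{}:B_{1,n-1}(\widehat{\Sigma}_g)]$, the surjection $\varphi$ is bijective. This is the ``bookkeeping'' actually carried out in \cite{B,BGG}; your last paragraph gestures at it but places the Reidemeister--Schreier step on the wrong side.
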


In Figure 1 we recall a geometric interpretation of the generators of $\bsurf{n}{g}{}$;
we represent  $\surf{g}{}$  as a polygon  with $4g$ sides, equipped with the standard identification of edges and one boundary component.
We may consider braids  as paths on the polygon,
which we draw with the usual `over and under' information at  the crossing points.
For the braid $a_i$ (respectively $b_j$), the only non-trivial string is the first one,
which passes  through the wall $\alpha_i$ (respectively the wall $\beta_j$).
The braids $\sigma_1, \dots, \sigma_{n-1}$ are the standard braid generators of the disc. One can easily write a braid represented by a loop of the first strand around the boundary component as the composition of the generators (see for instance Section~2.2 of \cite{B}). 

\begin{figure}[h]
 \centering
 \includegraphics[width=10cm]{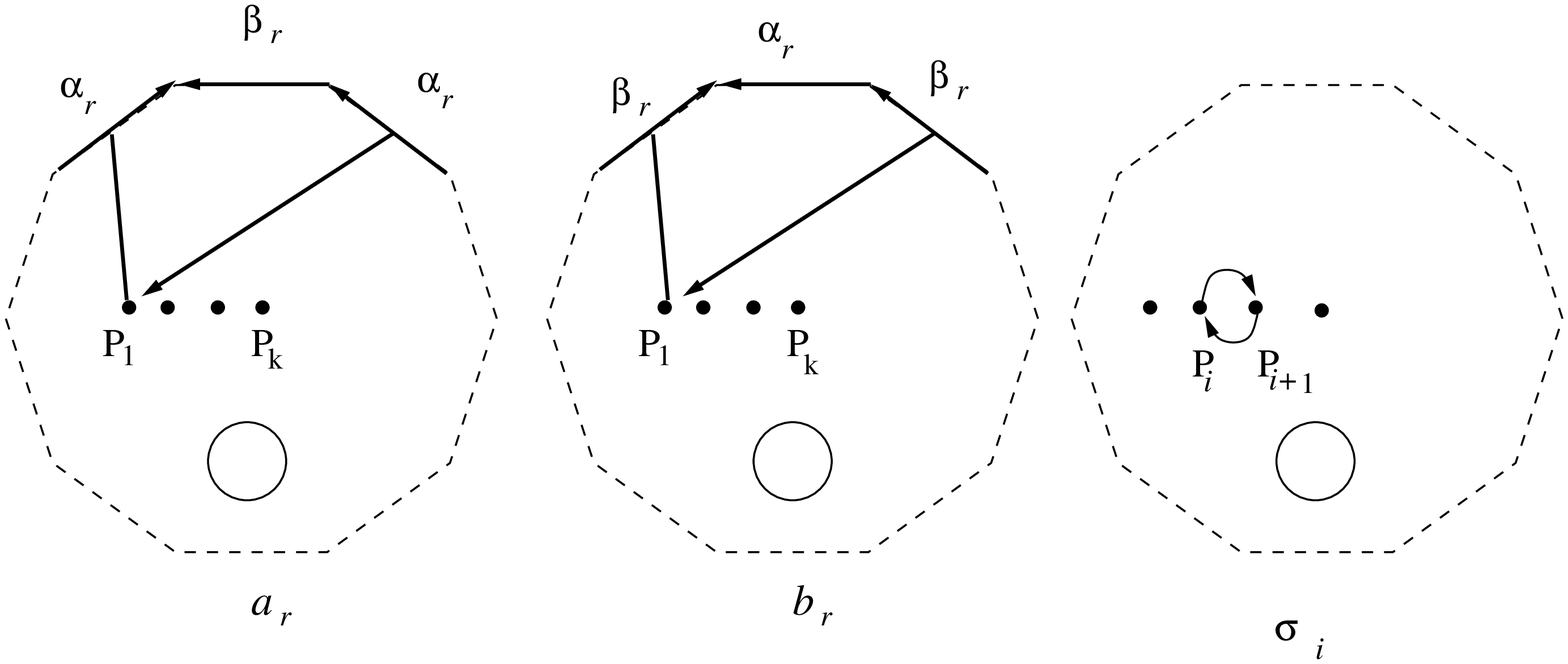}
 \caption{The generators $\si_1, \ldots, \si_{k-1}, a_1, b_1, \ldots, a_g, b_g$}
\end{figure}

From Theorem~\ref{th:presbng} one may deduce the following result.

\begin{lem}(\cite{BGG})\label{lem:presbng} 
Let $n\ge 3$.
The quotient group~$\bsurf{n}{g}{}/\Gamma_3(\bsurf{n}{g}{})$ admits the following group presentation:

\noindent \textbf{Generators:}

$a_1, b_1, \ldots, a_g, b_g, \sigma$.

\noindent  \textbf{Relations:}
\begin{gather}
\text{$a_1,b_1, \ldots, a_g, b_g$ and $\sigma$ commute pairwise except  $(a_i, b_i)_{i=1,
\ldots, g}$};\\
\text{$[a_1,  b_1]=\cdots=   [a_g,  b_g] =\sigma^2$}.
\end{gather}
\end{lem}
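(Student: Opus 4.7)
The plan is to start from the presentation of $G := \bsurf{n}{g}{}$ in Theorem~\ref{th:presbng} and rewrite each of its defining relations modulo $\Gamma_3(G)$. The key point is that $G/\Gamma_3(G)$ is nilpotent of class at most~$2$, so every commutator is central; I will therefore freely apply the standard class-$2$ commutator identities, in particular $xy = [x,y]\,yx$ (with $[x,y]$ central) and $[x^m, y^n] = [x,y]^{mn}$, to reorder products at the cost of a central correction factor.

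First I treat the braid relation~(\ref{eq:artin2}). Collecting all occurrences of $\sigma_i$ to the left of $\sigma_{i+1}$ on both sides of $\sigma_i\sigma_{i+1}\sigma_i = \sigma_{i+1}\sigma_i\sigma_{i+1}$ produces the identical central factor $[\sigma_{i+1},\sigma_i]$ on both sides; it cancels, leaving $\sigma_i^2\sigma_{i+1} = \sigma_i\sigma_{i+1}^2$ in $G/\Gamma_3(G)$, whence $\bar\sigma_i = \bar\sigma_{i+1}$ for $i = 1, \dots, n-2$. Denote the common image by $\sigma$; relation~(\ref{eq:artin1}) then becomes trivial. Taking $j = 2$ in relation~(\ref{eq:asjg}) gives $[\bar c_i, \bar\sigma_2] = 1$, and since $\bar\sigma_2 = \sigma$ we obtain $[\bar c_i, \sigma] = 1$ for every $c_i \in \{a_i, b_i\}$ and every $i$; this automatically takes care of~(\ref{eq:bsbg}). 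For~(\ref{eq:cddcg}), I rewrite $\sigma_1^{-1} c_j \sigma_1 = c_j [c_j, \sigma_1]$ in $G/\Gamma_3(G)$ and use centrality to reduce the relation to $[\bar c_i, \bar c_j] = 1$ for $1 \le j < i \le g$. Finally, for~(\ref{eq:abbag}), sliding each occurrence of $a_i$ and $b_i$ past the $\sigma_1$-factors on both sides introduces powers of $[a_i, \sigma_1]$ and $[b_i, \sigma_1]$ which vanish by the previous step, and the residual identity collapses to $[\bar a_i, \bar b_i] = \sigma^2$.

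These computations show that every relation of the proposed presentation $H$ is satisfied in $G/\Gamma_3(G)$, and conversely that each of the six families of relations in Theorem~\ref{th:presbng}, once $\sigma_j$ is replaced by $\sigma$, holds in $H$. The resulting assignments $a_i \mapsto \bar a_i$, $b_i \mapsto \bar b_i$, $\sigma \mapsto \bar\sigma_1$ and $a_i \mapsto a_i$, $b_i \mapsto b_i$, $\sigma_j \mapsto \sigma$ induce homomorphisms $H \to G/\Gamma_3(G)$ and $G/\Gamma_3(G) \to H$ (the latter factors through $G/\Gamma_3(G)$ because $H$ itself is nilpotent of class at most~$2$: its only non-trivial commutator $[a_i, b_i] = \sigma^2$ is central, $\sigma$ commuting with every generator). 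These two maps are mutually inverse on generators and hence establish the desired isomorphism.

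The main technical obstacle lies in the commutator bookkeeping for relation~(\ref{eq:abbag}), whose right-hand side involves five interleaved factors; one has to track carefully the exponent of each central commutator $[a_i, \sigma_1]$, $[b_i, \sigma_1]$, $[a_i, b_i]$ that is produced as generators are shuffled past the repeated $\sigma_1$'s. Once $[a_i, \sigma] = [b_i, \sigma] = 1$ is available, these corrections cancel neatly and $[a_i, b_i] = \sigma^2$ emerges. The other relations follow the same strategy with fewer factors to juggle.
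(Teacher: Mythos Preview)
Your proof is correct and follows exactly the standard strategy: reduce each defining relation of Theorem~\ref{th:presbng} modulo $\Gamma_3$ using the centrality of commutators, identify all $\bar\sigma_j$ via the braid relation, and then check that the abstract group $H$ is itself of nilpotency class~$\le 2$ so that the reverse map factors through $G/\Gamma_3(G)$. The paper does not spell out the argument here---it simply cites~\cite[Theorem~1]{BGG} and remarks that the closed-surface proof there carries over verbatim---but the method in that reference (and in the analogous computation the paper does give for Lemma~\ref{lem:presgamma}) is precisely the one you have written out.
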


\medskip

We point out that the corresponding result proved in~\cite[Theorem~1 ]{BGG} is in fact for closed surfaces but  the proof given there  can be repeated verbatim to prove Lemma \ref{lem:presbng}.

The following Corollary is a straightforward consequence  of Lemma \ref{lem:presbng}.

\begin{cor}\label{cor:decgamma}
Let $n\ge 3$. The group~ $\bsurf{n}{g}{}/\Gamma_3(\bsurf{n}{g}{}) $ is isomorphic to the semi-direct product
$$
G_g=\left( \Z \times \Z^g \right) \rtimes \Z ^g.
$$
More precisely, the first factor $\Z$ is central and is generated by $ \sigma $, the second factor $\Z^g$
is generated by $\{ a_1, \ldots, a_g\}$, and the third factor $\Z^g$ is generated by $\{b_1, \ldots, b_g\}$.
Any generator 
 $b_j$ (for $1 \le j \le g$) acts trivially on $a_ 1, \dots,  a_{j-1}, a_{j+1}$ and 
 $  b_j a_j b_j^{-1} =\sigma^{-2} a_j$.   Hence, $\bsurf{n}{g}{}/\Gamma_3(\bsurf{n}{g}{})$ is a central extension of $\Z^{2g}$ by $\Z$.
\end{cor}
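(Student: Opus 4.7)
The plan is to read off the semidirect product structure of $\bsurf{n}{g}{}/\Gamma_3(\bsurf{n}{g}{})$ directly from the presentation given in \relem{presbng}, and to identify the quotient with $G_g$ by constructing $G_g$ explicitly and matching the two presentations.

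First I would build $G_g$ abstractly. Take $N = \Z \times \Z^g$ generated by $\sigma, a_1, \ldots, a_g$ (with $\sigma$ generating the central $\Z$ factor), and $H = \Z^g$ generated by $b_1, \ldots, b_g$. Define an action $\phi \colon H \to \mathrm{Aut}(N)$ by $\phi(b_j)(\sigma) = \sigma$, $\phi(b_j)(a_i) = a_i$ for $i \neq j$, and $\phi(b_j)(a_j) = \sigma^{-2} a_j$. Each $\phi(b_j)$ fixes $\sigma$, and different $\phi(b_j)$'s act nontrivially on disjoint $a_i$ coordinates, so they commute pairwise in $\mathrm{Aut}(N)$; thus $\phi$ is a well-defined homomorphism and $G_g = N \rtimes_{\phi} H$ is a well-defined group whose underlying set is $N \times H \cong \Z^{2g+1}$.

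This semidirect product admits the presentation with generators $\sigma, a_1, b_1, \ldots, a_g, b_g$ in which $\sigma$ is central, the $a_i$'s and the $b_j$'s each commute pairwise, $[a_i, b_j] = 1$ for $i \neq j$, and $b_j a_j b_j^{-1} = \sigma^{-2} a_j$; the last relation is equivalent to $[a_j, b_j] = \sigma^2$. This is exactly the presentation of $\bsurf{n}{g}{}/\Gamma_3(\bsurf{n}{g}{})$ given in \relem{presbng}, so by von Dyck's theorem there are mutually inverse homomorphisms between the two groups sending each generator to its namesake, yielding the asserted isomorphism. Finally, $\langle \sigma \rangle$ is central in $G_g$, and the quotient $G_g / \langle \sigma \rangle$ satisfies the additional relations $[a_j, b_j] = 1$, hence is free abelian of rank $2g$; this gives the central extension $1 \to \Z \to \bsurf{n}{g}{}/\Gamma_3(\bsurf{n}{g}{}) \to \Z^{2g} \to 1$.

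There is no real obstacle here, since the statement is a structural reinterpretation of the presentation. The only point that requires care is verifying that the explicit model of $G_g$ carries no hidden relations beyond those listed, but this is automatic from the semidirect product construction, which realises $N$ as free abelian of rank $g+1$ and $H$ as free abelian of rank $g$.
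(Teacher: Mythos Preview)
Your proof is correct and follows precisely the approach the paper intends: the corollary is stated as a ``straightforward consequence'' of \relem{presbng}, and what you do---construct the semidirect product $G_g$ explicitly, read off its presentation, and match it generator by generator with that of \relem{presbng}---is exactly the verification the paper leaves to the reader. Your additional remarks on the central extension are also in line with the paper's concluding sentence.
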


Thus every element of $B_n(\Sigma_g)/\Gamma_3(B_n(\Sigma_g))$ can be written  in a unique way in the form
$ \sigma^p  \prod_{i=1}^g a_i^{m_i} \prod_{i=1}^gb_i^{n_i}$.


\subsection{Group presentations for mixed braid groups of surfaces and their $3$-commutator groups}
 

Following the standard definition of a Coxeter system, we introduce the notion of a \emph{surface braid group system}. 
\begin{defin} Let $G$ be a group. Let $S = \{\si_1,\cdots,\si_{k-1}\}$, $AB= \{a_1,\cdots, a_g,b_1,$ $\cdots, b_g\}$ and $Z = \{\zeta_1,\ldots,\zeta_n\}$ be subsets of $G$ such that $k\geq 1$ and $g,n\geq 0$. We say that $(G,S,AB,Z)$ is a \emph{surface braid group system} if $G$ admits the following group presentation: 
$$\begin{array}{ll}
\si_i\si_j=\si_j\si_i;&\lvert i-j \rvert \geq
2\\
\si_i\si_{i+1}\si_i= \si_{i+1}\si_i \si_{i+1};& 1\leq i\leq k-2\\
a_i\si_j= \si_j a_i\textrm{ and }b_i\si_j= \si_j b_i;&j\in\{2,\ldots,k-1\},\ i\in \{1, \ldots, g\}\\
a_i \si_1 a_i \si_1= \sigma_1 a_i \sigma_1 a_i\textrm{ and } b_i \sigma_1 b_i \sigma_1= \sigma_1 b_i \sigma_1 b_i &i\in \{1, \ldots, g\} \\
a_i \sigma_1 b_i = \sigma_1 b_i \sigma_1 a_i \sigma_1;& i\in \{1, \ldots, g\}\\
c_i (\sigma_1^{-1}  c_j \sigma_1)=(\sigma_1^{-1} c_j \sigma_1) c_i; &c_i\in\{a_i,b_i\}, c_j\in\{a_j,b_j\},\   j<i\\
\zeta_j\si_i = \si_i\zeta_j&i\neq 1;\\
(\si_1^{-1}\zeta_j\si_1)a_\ell = a_\ell (\si_1^{-1}\zeta_j\si_1)\\(\si_1^{-1}\zeta_j\si_1)b_\ell = b_\ell (\si_1^{-1}\zeta_j\si_1);\\
(\si_1^{-1}\zeta_j\si_1)\zeta_\ell = \zeta_\ell (\si_1^{-1}\zeta_j\si_1)& j<\ell; \\
(\si_1 \zeta_j \si_1)\zeta_j = \zeta_j(\si_1 \zeta_j \si_1).\\
\end{array}$$
\end{defin}

The following Lemma is a straightforward consequence of the group presentations given in \cite{B}.

\begin{lem}
\begin{enumerate}[(i)]
\item There exist $S = \{\si_1,\cdots,\si_n\}$ and $AB= \{a_1,\cdots, a_g,b_1,\cdots, b_g\}$ such that $(\bsurf{n}{g}{},S,AB,\emptyset)$ is a surface braid group system.
\item There exist $S = \{\si_1,\cdots,\si_k\}$, $AB= \{a_1,\cdots, a_g,b_1,\cdots, b_g\}$  and $Z = \{\zeta_1,\ldots,$ $\zeta_n\}$ such that $(\bsurf{k}{g}{,n},S,AB,Z)$ is a surface braid group system. 
\end{enumerate}
\end{lem}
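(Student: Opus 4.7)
My plan is to handle parts (i) and (ii) by direct comparison with presentations already in the literature; in both cases the argument is essentially bookkeeping.

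For (i), I would observe that when $Z=\vide$, every relation in the definition of a surface braid group system that involves some $\zeta_j$ becomes vacuous. What remains are the braid relations on $\{\si_1,\ldots,\si_{n-1}\}$, the commutation $c_i\si_j=\si_j c_i$ for $j\geq 2$, the ``squared'' relations $c_i\si_1 c_i\si_1=\si_1 c_i\si_1 c_i$, the mixed relation $a_i\si_1 b_i=\si_1 b_i\si_1 a_i\si_1$, and the commutation $c_i(\si_1^{-1}c_j\si_1)=(\si_1^{-1}c_j\si_1)c_i$ for $j<i$. These coincide verbatim with relations~(\ref{eq:artin1})--(\ref{eq:cddcg}) of \reth{presbng}. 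Taking $S=\{\si_1,\ldots,\si_{n-1}\}$ and $AB=\{a_1,b_1,\ldots,a_g,b_g\}$ as in that theorem therefore realises $\bsurf{n}{g}{}$ as a surface braid group system, and (i) is proved.

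For (ii), my approach is to invoke the presentation of the mixed braid group $\bsurf{k}{g}{,n}$ supplied in~\cite{B}. Geometrically, to the generators of $\bsurf{k}{g}{}$ one adjoins loops $\zeta_1,\ldots,\zeta_n$, where $\zeta_j$ is a loop of the first strand encircling the puncture $x_j$. The additional relations of the definition then encode standard geometric facts: $\zeta_j$ commutes with every $\si_i$ for $i\neq 1$ because such a $\si_i$ does not involve the first strand; the conjugate $\si_1^{-1}\zeta_j\si_1$ represents the analogous loop for the second strand, which can be isotoped away from the handles and so commutes with each $a_\ell$ and $b_\ell$; the same conjugate commutes with $\zeta_\ell$ when $j<\ell$ since the two loops can be made disjoint; and the self-relation $(\si_1\zeta_j\si_1)\zeta_j=\zeta_j(\si_1\zeta_j\si_1)$ is the $\zeta$-analogue of the relations $c_i\si_1 c_i\si_1=\si_1 c_i\si_1 c_i$ used in (i). Matching each of these against the corresponding relation in the presentation of~\cite{B}, and checking that no further relation appears on either side, delivers the claim.

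The only mildly non-routine task I anticipate is confirming that the generating system used in~\cite{B} coincides, after a few Tietze transformations if necessary, with our system $S\cup AB\cup Z$; once the generating sets are aligned, the comparison of relations is mechanical. This is why the excerpt correctly flags the lemma as a straightforward consequence of~\cite{B}.
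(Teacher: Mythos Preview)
Your proposal is correct and follows essentially the same approach as the paper, which does not give a detailed proof but simply records the lemma as ``a straightforward consequence of the group presentations given in~\cite{B}''. Your treatment of~(i) via direct comparison with \reth{presbng} and of~(ii) via the presentation in~\cite{B} is exactly the intended argument, only spelled out in greater detail than the paper itself provides.
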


\begin{prop}\label{prop:presMB}
The group~$\bsurf{k,n}{g}{}$ admits the following group presentation:

\noindent \textbf{Generating set:} $S\cup \tilde{S}\cup AB\cup \widetilde{AB}\cup Z$ with 
$$\begin{array}{lcllcl} \tilde{S}&=&\{\tsi_1, \ldots, \tsi_{n-1}\},&S&=&\{ \si_1, \ldots, \si_{k-1}\},\\
\widetilde{AB}&=&\{\tia_1, \tib_1, \ldots, \tia_g, \tib_g\},&AB&=&\{a_1, b_1, \ldots, a_g, b_g\},\\
Z&=&\{\zeta_1,\cdots, \zeta_n\}.
\end{array}$$

\noindent  \textbf{Relations:}
 \begin{enumerate}[(a)]
\item the relations associated with the system~$(\bsurf{k}{g}{,n},S,AB,Z)$;
\item the relations associated with the system~$(\bsurf{n}{g}{},\tilde{S},\widetilde{AB},\emptyset)$;
\item the relations describing the action of~$\bsurf{n}{g}{}$ on~$\bsurf{k}{g}{,n}$;
\begin{enumerate}
\item $\tsi_i\si_j\tsi_i^{-1} = \tia_i\si_j\tia_i^{-1} = \tib_i\si_j\tib_i^{-1} =\si_j$;

\item  $\tsi_ia_j\tsi_i^{-1} = a_j$ ; $\tsi_ib_j\tsi_i^{-1} = b_j$ 

\item $\left\{\begin{array}{lll}\tsi_i\z_{i+1}\tsi_i^{-1} = \z_i;\\
\tsi_i\z_{i}\tsi_i^{-1} = \z_i^{-1}\z_{i+1}\z_i;\\
 \tsi_i\z_{j}\tsi_i^{-1} = \z_j,& j\neq i,i+1;\end{array}\right.$
$\left\{\begin{array}{llll}\tia_i \z_1 \tia_i^{-1} =\z_1^{a_i \z_1} ;\\  
\tib_i \z_1 \tib_i^{-1} =  \z_1^{b_i \z_1};\\  
\tia_i \z_j \tia_i^{-1} = \z_j^{[a_i^{-1},\z_1^{-1}]}  &j\neq 1;\\ 
\tib_i \z_j \tib_i^{-1} = \z_j^{[b_i^{-1},\z_1^{-1}]}  &j\neq 1;
\end{array}\right.$

\item $\left\{\begin{array}{ll} \tia_ia_i\tia_i^{-1} = \z_1^{-1} a_i \z_1;\\ 
\tia_i a_j\tia_i^{-1} =  a_j^{[a_i^{-1},\z_1^{-1}]}&i > j;\\  
\tia_i a_\ell \tia_i^{-1} = a_\ell,&\ell > i;\end{array}\right.$  
$\left\{\begin{array}{ll}  \tib_i b_i\tib_i^{-1} = \z_1^{-1} b_i \z_1;\\ 
\tib_i b_j\tib_i^{-1} =  b_j^{[b_i^{-1},\z_1^{-1}]} &i > j;\\ 
\tib_i b_\ell \tib_i^{-1} = b_\ell,&\ell > i;\end{array}\right.$

\item   $\left\{\begin{array}{ll} \tia_ib_i\tia_i^{-1} = b_i \z_1;\\  
\tia_i b_j\tia_i^{-1} = b_j^{[a_i^{-1},\z_1^{-1}]}  &i > j;\\  
\tia_i b_\ell \tia_i^{-1} = b_\ell,&\ell > i
\end{array}\right.$ 
$\left\{\begin{array}{ll} 
\tib_ia_i\tib_i^{-1} = \z_1^{-1} a_i [b_i^{-1},\z_1^{-1}];\\
\tib_i a_j\tib_i^{-1} = a_j^{[b_i^{-1},\z_1^{-1}]}&i > j;\\  
\tib_i a_\ell \tib_i^{-1} = a_\ell,&\ell > i; \end{array}\right.$
\end{enumerate}
where $a^b:=b^{-1}ab$.
\end{enumerate}
\end{prop}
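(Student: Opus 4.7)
The plan is to exploit the fact that the short exact sequence
$$1 \to \bsurf{k}{g}{,n} \to \bsurf{k,n}{g}{} \to \bsurf{n}{g}{} \to 1$$
splits, by Lemma~\ref{lem:fibrations}(b), since $\surf{g}{}$ has non-empty boundary. A geometric section is given by adding the last $n$ strands close to the boundary component. Consequently $\bsurf{k,n}{g}{}$ is a semi-direct product $\bsurf{k}{g}{,n} \rtimes \bsurf{n}{g}{}$, and a standard fact about presentations of split extensions asserts that a presentation of the total group is obtained by taking the disjoint union of presentations of the kernel and of the (image of the) quotient, together with one conjugation relation for each pair (generator of the kernel, generator of the section) encoding the induced action.

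The first two blocks of generators and relations, namely item~(a) for $\bsurf{k}{g}{,n}$ and item~(b) for $\bsurf{n}{g}{}$, are then provided by the previous lemma, which gives the two surface braid group systems $(\bsurf{k}{g}{,n}, S, AB, Z)$ and $(\bsurf{n}{g}{}, \tilde{S}, \widetilde{AB}, \emptyset)$. All that remains is to identify the conjugation action of $\bsurf{n}{g}{}$ on $\bsurf{k}{g}{,n}$, which constitutes the content of item~(c).

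To obtain the explicit formulas, I would read them off the polygon picture of Figure~1. The braid $\tilde{\sigma}_i$ is a standard half-twist of strands $i$ and $i+1$ among the last $n$ strands and hence, by a classical Artin-type computation, induces the trivial action on $\sigma_j$, $a_j$, $b_j$ and the usual permutation action on $\zeta_j$ listed in (c)(iii). The braids $\tilde{a}_i$ and $\tilde{b}_i$ are loops whose unique non-trivial strand (among the last $n$) crosses the wall $\alpha_i$, respectively $\beta_i$, of the polygon; conjugation by them corresponds to the associated point-push diffeomorphism of $\surf{g}{,n}$. Tracking the first strand when it is forced across such a wall, and keeping track of the fact that the auxiliary loops $a_j, b_j, \zeta_j$ of $\bsurf{k}{g}{,n}$ must be reindexed by means of the corresponding commutators with $\zeta_1$, produces precisely the conjugates appearing in (c)(iv)--(v). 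The somewhat unusual asymmetry between the formulas for $\tilde{a}_ia_i\tilde{a}_i^{-1}$ and $\tilde{b}_ia_i\tilde{b}_i^{-1}$ reflects the relation~\eqref{eq:abbag} in $\bsurf{n}{g}{}$ and is the geometric shadow of the non-commutativity of $a_i$ and $b_i$.

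The main obstacle is the case-by-case verification that the listed formulas are both valid and complete. Validity can be done geometrically on the polygon, or algebraically by lifting the generators of $\bsurf{k,n}{g}{}$ into the known presentation of $\bsurf{k+n}{g}{}$ given in~\cite{B} and performing a finite calculation in that group. Completeness follows from the semi-direct product principle recalled above: if $G$ denotes the group presented by the data of the proposition, then there is an obvious surjection $G \to \bsurf{k,n}{g}{}$ sending each abstract generator to the corresponding braid; on the subgroups generated by $S \cup AB \cup Z$ and $\tilde{S} \cup \widetilde{AB}$ this surjection factors through the known presentations, hence is injective on each factor, and the listed conjugation relations force $G$ itself to be a semi-direct product isomorphic to $\bsurf{k}{g}{,n} \rtimes \bsurf{n}{g}{} \cong \bsurf{k,n}{g}{}$.
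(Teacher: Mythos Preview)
Your approach is essentially the same as the paper's: the authors also invoke the splitting of the sequence (MSB) to write $\bsurf{k,n}{g}{}\cong \bsurf{n}{g}{}\ltimes\bsurf{k}{g}{,n}$, take the two surface braid group systems as the presentations of kernel and quotient, and then verify the conjugation relations of item~(c) by direct geometric inspection of the corresponding braids on the polygon (their Figures~2--4). The only minor difference is that you also mention an algebraic alternative (lifting into the presentation of $\bsurf{k+n}{g}{}$ from~\cite{B}), whereas the paper sticks to the geometric check; either route is fine here.
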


\begin{proof} As we recalled in Section~2, the short exact sequence~(MSB):
$1 \to \bsurf{k}{g}{,n} $ $\to \bsurf{k,n}{g}{} $ $\to \bsurf{n}{g}{} \to 1$ 
splits. Therefore, the group~$\bsurf{k,n}{g}{}$ is isomorphic to $\bsurf{n}{g}{}\ltimes \bsurf{k}{g}{,n}$. We may interpret the braids depicted in Figure~2 as geometric representatives of generators of $\bsurf{k}{g}{,n}$, and those depicted in Figure~3 as the coset representatives of generators of
$\bsurf{n}{g}{}$ in $\bsurf{n}{g}{}$.   The result follows by a straightforward verification of the corresponding geometric braids, see for instance Figure~4. 
\end{proof}

\begin{figure}[h]
 \centering
 \includegraphics[width=10cm,]{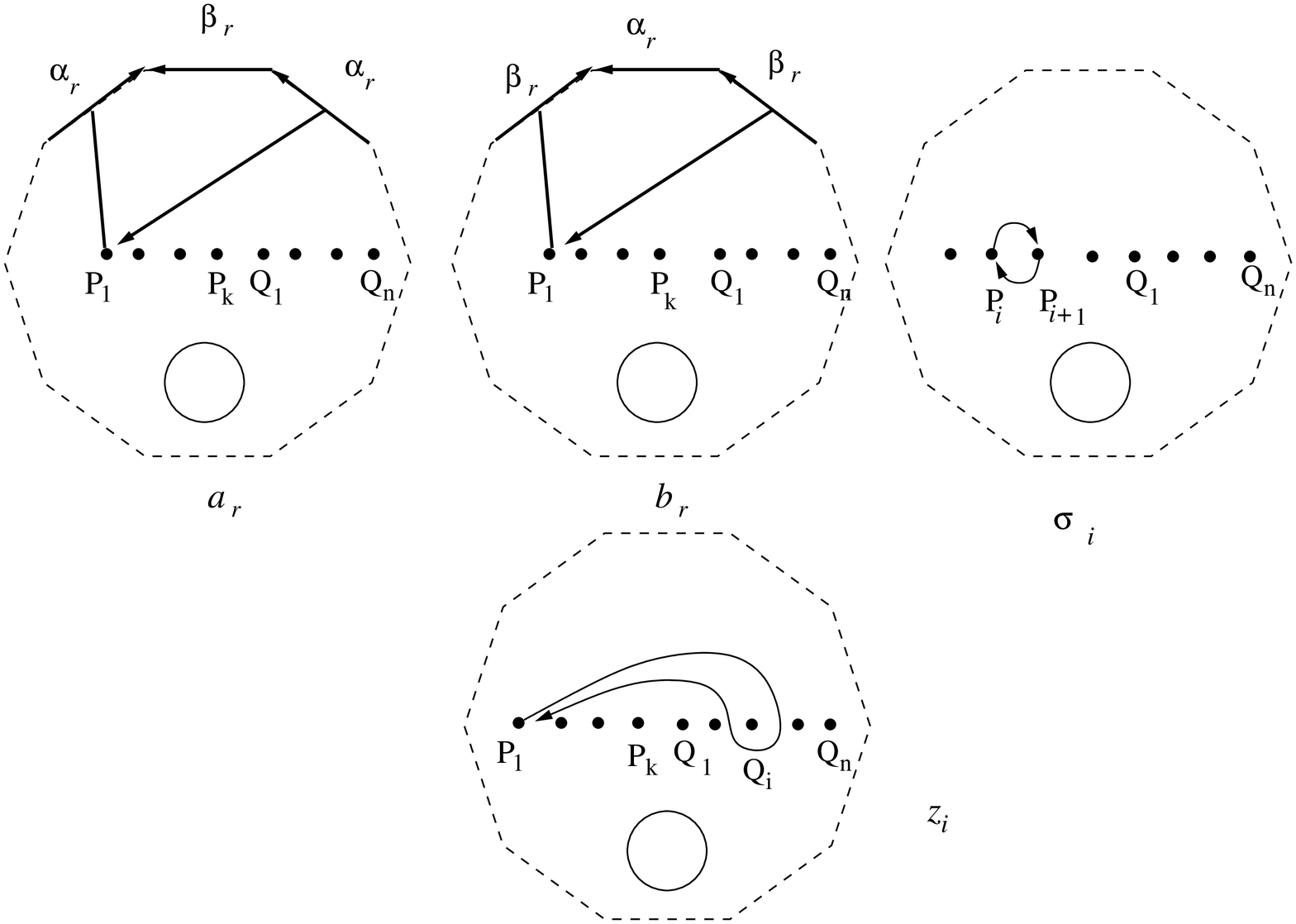}
 \caption{The generators $\si_1, \ldots, \si_{k-1}, a_1, b_1, \ldots, a_g, b_g,\zeta_1,\cdots, \zeta_n$}
\end{figure}

\begin{figure}[h]
 \centering
 \includegraphics[width=10cm]{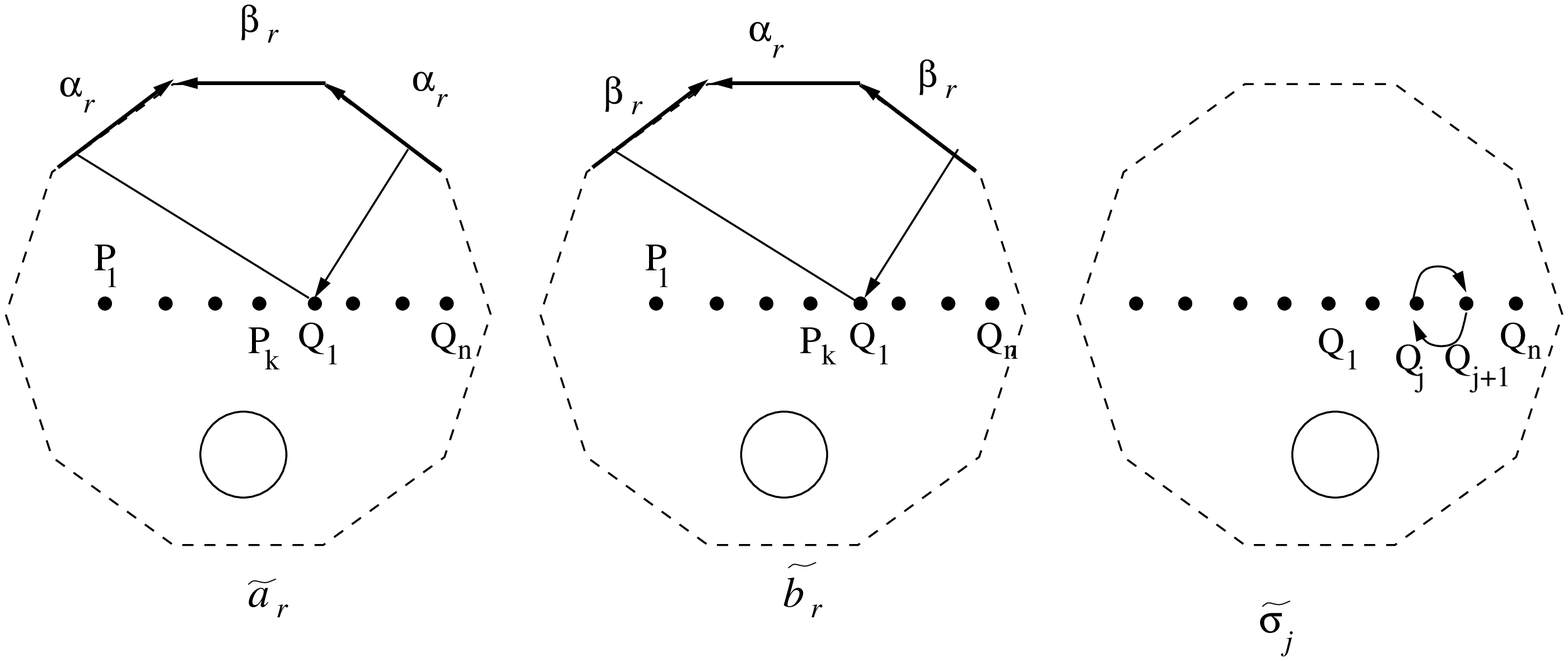}
 \caption{The generators $\tsi_1, \ldots, \tsi_{n-1}, \tia_1, \tib_1, \ldots, \tia_g, \tib_g$}
\end{figure}

\begin{figure}
 \centering
 \includegraphics[width=10cm]{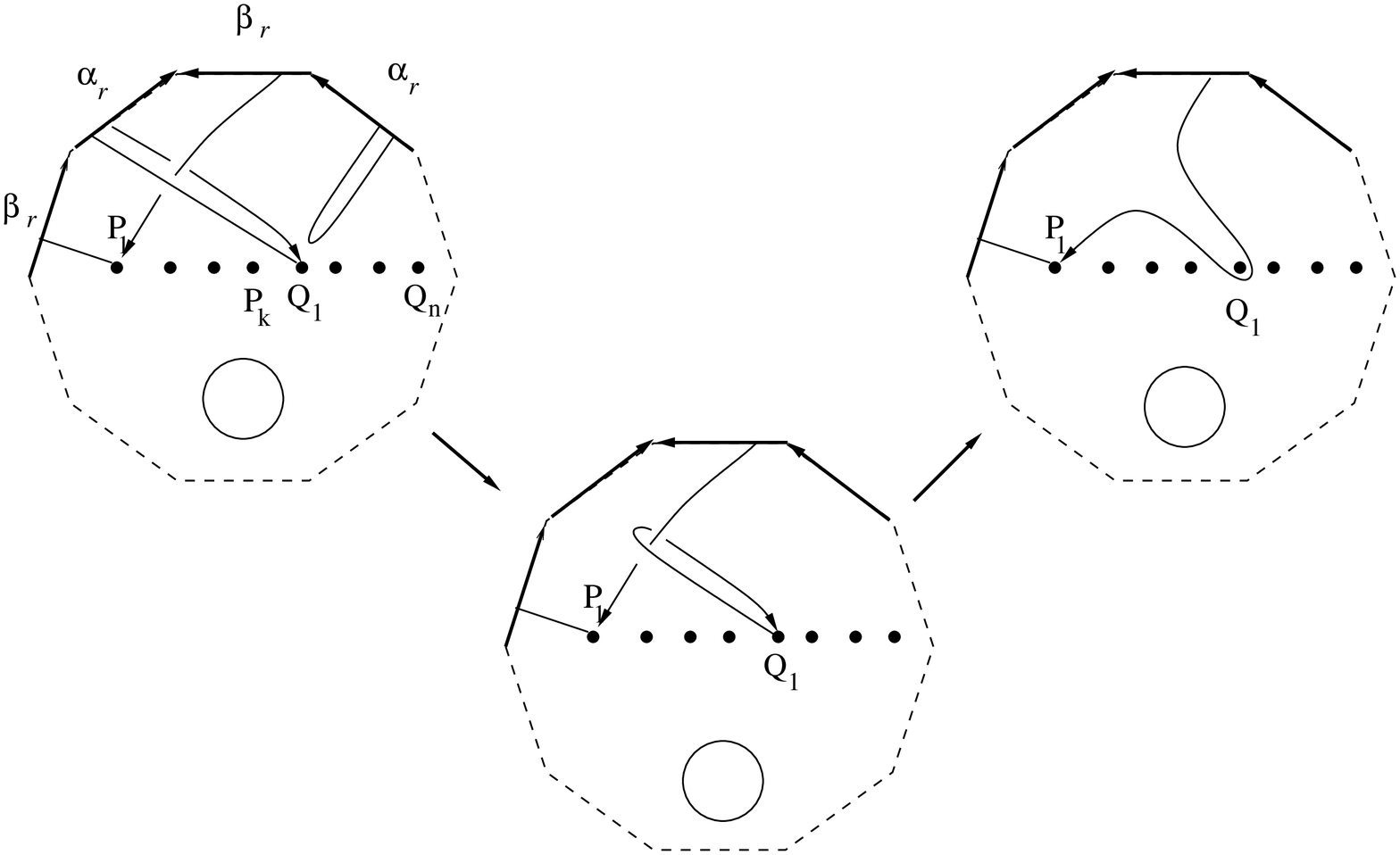}
 \caption{The braids $\tia_ib_i\tia_i^{-1}$ and  $b_i \z_1$ are isotopic.}
\end{figure}

As in the case of $ B_n(\surf{g}{})/\Gamma_3(B_n(\surf{g}{}))$ (Corollary~\ref{cor:decgamma}), we may obtain a group presentation of  $\bsurf{k,n}{g}{}/\Gamma_3(\bsurf{k,n}{g}{})$ from the previous proposition, and decompose the group as a semi-direct product.

\begin{lem}\label{lem:presgamma} Let  $k,n\geq 3$. Then the group $\bsurf{k,n}{g}{}/\Gamma_3(\bsurf{k,n}{g}{})$ has the following group presentation:

\noindent \textbf{Generators:} $\si, \tsi, \zeta, a_1, b_1, \ldots, a_g, b_g, \tia_1, \tib_1, \ldots, \tia_g, \tib_g .$\\
\textbf{Relations:} 
\begin{enumerate}
\item $[\si,a_i] = [\si,b_i] = [\tsi,\tia_i] = [\tsi,\tib_i] = [\tsi,a_i] = [\tsi,b_i] = [\si,\tia_i] = [\si,\tib_i] = [\si,\tsi] =  1$;
\item $[a_i,a_j] = [a_i,b_j] = 1$ for $i\neq j$ and  $[a_j,b_j] =\si^2$; 
\item $[\tia_i,\tia_j] = [\tia_i,\tib_j] = 1$ for $i\neq j$  and  $[\tia_j,\tib_j] = \tsi^2$
\item $[a_i,\tia_j] = [b_i,\tib_j] = 1$ for any pair $(i,j)$;
\item $[b_i,\tia_j] = [\tib_j,a_i] = 1$ for $i\neq j$  and  $[b_i,\tia_i] = [\tib_i,a_i]= \zeta$;
\item $[\zeta,a_i] = [\zeta,b_i] = [\zeta,\tia_i] = [\zeta,\tib_i] = [\zeta,\si] = [\zeta,\tsi] = 1$.
\end{enumerate}
\end{lem}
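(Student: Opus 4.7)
The plan is to start from the presentation of $\bsurf{k,n}{g}{}$ provided by \repr{presMB} and impose $\Gamma_3 = 1$. In the quotient all commutators become central, so one has the bilinear identities $[xy,z] = [x,z][y,z]$, $[x,yz] = [x,y][x,z]$, $[x^{-1},y] = [x,y]^{-1}$, and every triple commutator $[x,[y,z]]$ vanishes. The proof then splits into three tasks: (i) show that the generating set from \repr{presMB} collapses to the one in the statement; (ii) translate each conjugation relation of \repr{presMB} into a commutator relation among the new generators; and (iii) check that no further relations are required.

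For task (i), the same argument as in the proof of \relem{presbng}---which only uses the Artin braid relations and the $AB$ relations---shows that all $\sigma_i$ are equal in $\bsurf{k,n}{g}{}/\Gamma_3$ (call this common image $\sigma$), and likewise all $\tsi_i$ coincide to a common $\tsi$. For the puncture generators $\zeta_j$, one combines the conjugation relations of \repr{presMB}(c-iii), in particular $\tsi_i\zeta_{i+1}\tsi_i^{-1}=\zeta_i$, with the centrality of commutators modulo $\Gamma_3$ to obtain $\zeta_i \equiv \zeta_{i+1}$; we denote the common image by $\zeta$.

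For task (ii), every conjugation relation of the form $g h g^{-1} = h \cdot w$ with $w \in \Gamma_2$ in \repr{presMB} produces a commutator identity $[g,h] = w$ in the quotient. Relations (2) and the part of (1) involving only $\{\sigma, a_i, b_i\}$ come from the computation underlying \relem{presbng} applied to the sub-system $(\bsurf{k}{g}{,n},S,AB,Z)$; analogously (3) and the part of (1) involving only $\{\tsi, \tia_i, \tib_i\}$ come from $(\bsurf{n}{g}{},\tilde S,\widetilde{AB},\emptyset)$. The remaining commutation relations in (1), namely $[\tsi, a_i] = [\tsi, b_i] = [\sigma, \tia_i] = [\sigma, \tib_i] = [\sigma, \tsi] = 1$, are immediate from \repr{presMB}(c-i) and (c-ii). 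The key new relations are those of (5): for instance, \repr{presMB}(c-v) gives $\tia_i b_i \tia_i^{-1} = b_i \zeta_1$, which yields $[\tia_i, b_i] = \zeta$, and similarly $[\tib_i, a_i] = \zeta$. Relation (6), the centrality of $\zeta$, then follows at once because $\zeta$ is a commutator and commutators are central modulo $\Gamma_3$. The vanishing commutators in (4), as well as the remaining relations of (5), follow from \repr{presMB}(c-iv) by unpacking with bilinearity: the conjugating exponents $[a_i^{-1},\zeta_1^{-1}]$ and $[b_i^{-1},\zeta_1^{-1}]$ are themselves commutators, hence central, so conjugation by them acts trivially and only the desired identity survives.

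Task (iii) is the routine verification that every relation of \repr{presMB} is a consequence of the listed ones together with $\Gamma_3 = 1$; it can also be approached by realising the candidate quotient explicitly as a central extension, mirroring the structure of \reco{decgamma}. The main obstacle lies in task (ii): the action relations (c-iii), (c-iv), and (c-v) of \repr{presMB} feature nested commutators such as $\zeta_1^{a_i\zeta_1}$ and $a_j^{[a_i^{-1},\zeta_1^{-1}]}$, and one must carefully unpack each using commutator calculus in $G/\Gamma_3$, invoking $[x,[y,z]] = 1$ repeatedly, and confirm that the resulting identities all collapse consistently to either $1$ or $\zeta$---in particular that $\zeta$ is the unique non-trivial commutator among distinct pairs of mixed generators.
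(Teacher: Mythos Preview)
Your approach is essentially the same as the paper's: both start from the presentation in \repr{presMB}, collapse the generators via the braid relations (and the $\tsi_i\zeta_{i+1}\tsi_i^{-1}=\zeta_i$ relation for the $\zeta_j$), read off the commutator relations in the quotient, and then handle the converse direction. The one point worth sharpening is your task~(iii): the paper makes explicit that the converse amounts to checking, in the \emph{abstract} group $G$ defined by the given presentation, that $[a,[b,c]]=1$ for all $a,b,c\in G$ (equivalently $\Gamma_3(G)=1$), which it does by reducing via $[a,bc]=[a,b]\,b[a,c]b^{-1}$ to generators and observing that every commutator of generators equals a power of $\sigma$, $\tsi$, or $\zeta$, hence is central; your phrasing ``a consequence of the listed ones together with $\Gamma_3=1$'' risks assuming what has to be proved, though your alternative via the explicit central-extension realisation would indeed settle it.
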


\begin{proof} Consider the group presentation of~$\bsurf{k,n}{g}{}$ given in Proposition~\ref{prop:presMB} and the map $p :\bsurf{k,n}{g}{}\to \bsurf{k,n}{g}{}/\Gamma_3(\bsurf{k,n}{g}{})$.
It follows from the braid relations $\si_i\si_{i+1}\si_i = \si_{i+1}\si_{i}\si_{i+1}$ and $\tsi_i\tsi_{i+1}\tsi_i = \tsi_{i+1}\tsi_{i}\tsi_{i+1}$ that $\si_i = \si_{i+1}[\si_i,\si_{i+1}]$  and  
$\tsi_i = \tsi_{i+1}[\tsi_i,\tsi_{i+1}]$. But  $[\tsi_i,\tsi_{i+1}]$ and $[\si_i,\si_{i+1}]$ belong to~$\Gamma_2(\bsurf{k,n}{g}{})$. 
Their images under $p$  therefore belong to  the centre of  $\bsurf{k,n}{g}{}/\Gamma_3(\bsurf{k,n}{g}{})$. Since  $p(\si_{i})$ and  $p(\si_{i+1})$  are center elements,  the equality~$p(\si_i \si_{i+1} \si_i) = p(\si_{i+1}\si_{i}\si_{i+1})$ implies  that  $p(\si_i) = p(\si_{i+1})$. We denote the image of $\si_i$ under $p$ by $\si$. 
Similarly, the $\tsi_j$ have the same image under $p$, which we denote by $\tsi$ in what follows. By abuse of notation, we also denote $p(a_i), p(b_i), p(\tia_i)$ and $p(\tib_i)$ by  $a_i, b_i, \tia_i$ and $\tib_i$ respectively. Using the fact that $\si_ia_j = a_j\si_i$ for $i\neq j$, we 
obtain $\sigma a_j = a_j\sigma$ for all $j$. Similarly, we have $\si b_j = b_j\si$, $\si \tia_j = \tia_j\si$, $\tsi \tib_j = \tib_j\tsi$ 
and $\tsi p(\zeta_j) = p(\zeta_j)\tsi$. We deduce from relations~$a_i (\sigma_1^{-1}  a_j \sigma_1)=(\sigma_1^{-1} a_j \sigma_1) a_i$ for $i\neq j$ that $a_ia_j = a_ja_i$. Similarly we have $a_ib_j = b_ja_i$, $\tia_i\tia_j = \tia_j\tia_i$ and $\tib_i\tia_j = \tia_j\tib_i$ for $i\neq j$. For the same reason, we have $\tia_i p(\zeta_j) = p(\zeta_j)\tia_i$ and $\tib_i p(\zeta_j) = p(\zeta_j)\tib_i$. Now, from the equality~$a_j \si b_j = \si b_j \si a_j \si$, we deduce the relation $[a_j,b_j] =\si^2$. Similarly we have~$[\tia_j,\tib_j] =\tsi^2$. From Relations~$(i)$ and~$(ii)$ of the presentation of~$\bsurf{k,n}{g}{}$, we have~$[\tsi,a_i] = [\tsi,b_i] = [\si,\tia_i] = [\si,\tib_i] = [\si,\tsi] =  1$. From Relations~$(iii)$ we see that $\si p(\zeta_{i+1})\si^{-1} = \zeta_i = \si_j\zeta_{i}\si_j^{-1}$ with $j\neq i,i+1$. Then, all the $\zeta_i$ have the same image under $p$, which we denote by $\zeta$. 
Then $[\zeta,\tia_i] = [\zeta,\tib_i] = [\zeta,\si] = [\zeta,\tsi] = 1$. Still using Relations~$(iii)$, we have $[\zeta,a_i] = [\zeta,b_i] = 1$. From Relations~$(iv)$ we obtain $[a_i,\tia_j] = [b_i,\tib_j] = 1$, and from Relations~$(v)$ we get $[b_i,\tia_j] = [\tib_j,a_i] = 1$  for $i\neq j$  and  $[b_i,\tia_i] = [\tib_i,a_i]= \zeta$. Then, the defining relations of the presentation given in the lemma hold in $\bsurf{k,n}{g}{}/\Gamma_3(\bsurf{k,n}{g}{})$ if one takes $\si = p(\si_1), \tsi = p(\tsi_1), \zeta = p(\zeta_1), a_i = p(a_i), b_i = p(b_i), \tia_i  = p(\tia_i)$ and $\tib_i = p(\tib_i)$. Conversely, let $G$ be the group defined by the presentation given in the lemma. We may check without difficulty that we have a group homomorphism from $\bsurf{k,n}{g}{}$ to $G$ that sends $\si_i,\tsi_j,a_{i'},b_{i''}, \tia_{j'},\tia_{j''}$ and $\zeta_\ell$ to $\si,\tsi,a_{i'},b_{i''}, \tia_{j'},\tia_{j''}$ and $\zeta$ respectively.  In order to prove that $G$ is isomorphic to $\bsurf{k,n}{g}{}/\Gamma_3(\bsurf{k,n}{g}{})$, we need to prove that in $G$, the equality $[a,[b,c]] = 1$ holds for all $a,b,c$ in $G$. Using the equality~$[a,bc] = [a,b]b[a,c]b^{-1}$, it is enough to consider the case where $a,b$ and $c$ are defining generators of $G$. But in these particular cases, the equality clearly holds since $\si,\tsi$ and $\zeta$ belong to the centre of $G$, and therefore any commutator~$[x,y]$, where $x,y$ are defining generators, is in the centre of $G$. Thus $G$ and $\bsurf{k,n}{g}{}/\Gamma_3(\bsurf{k,n}{g}{})$ are isomorphic.  \end{proof}

\begin{cor}\label{cor:decgammamix}
Let  $k,n\geq 3$. The group $\displaystyle \frac{\bsurf{k,n}{g}{}}{\Gamma_3(\bsurf{k,n}{g}{})}$ is isomorphic to the semi-direct product:
$$
H_g:=\left(\Z^3  \times \Z^{2g} \right) \rtimes \Z^{2g}.
$$
More precisely, the first factor $\Z^3$  is central and is generated by $ \sigma, \tsi, \zeta$, the second factor $\Z^{2g}$
is generated by $\{ a_1, \ldots, a_g, \tia_1 \ldots,  \tia_g \}$, and the third factor $\Z^{2g}$ is generated by 
$\{ b_1, \ldots, b_g, \tib_1 \ldots,  \tib_g \}$.  \end{cor}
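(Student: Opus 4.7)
The strategy is to extract the semi-direct product structure directly from the presentation in Lemma~\ref{lem:presgamma}. Write $Q := \bsurf{k,n}{g}{}/\Gamma_3(\bsurf{k,n}{g}{})$, and introduce the two candidate subgroups $N := \langle \si, \tsi, \z, a_1, \ldots, a_g, \tia_1, \ldots, \tia_g \rangle$, intended to realise the normal factor $\Z^3 \times \Z^{2g}$, and $K := \langle b_1, \ldots, b_g, \tib_1, \ldots, \tib_g \rangle$, intended to realise the complement $\Z^{2g}$. I would first check that $N$ is abelian: relations (1) and (6) place $\si, \tsi, \z$ in the centre, and relations (2), (3), (4) give $[a_i, a_j] = [\tia_i, \tia_j] = [a_i, \tia_j] = 1$ for all relevant pairs. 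Similarly, $K$ is abelian: $[b_i, \tib_j] = 1$ is part of relation~(4), whereas $[b_i, b_j] = [\tib_i, \tib_j] = 1$ for $i \neq j$ follows in $Q$ because the surface braid relation~(\ref{eq:cddcg}) collapses to a commutation relation once $\si_1$ becomes central modulo~$\Gamma_3$.

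Next, I would verify that $N$ is normal in $Q$ and that $Q = NK$. Normality is a direct consequence of relations (2), (3), (5): conjugating a generator of $N$ by a generator of $K$ yields another element of $N$, and more precisely one computes $b_j a_j b_j^{-1} = \si^{-2} a_j$ from $[a_j, b_j] = \si^2$, $\tib_j \tia_j \tib_j^{-1} = \tsi^{-2} \tia_j$ from $[\tia_j, \tib_j] = \tsi^2$, and $b_j \tia_j b_j^{-1} = \z \tia_j$ together with $\tib_j a_j \tib_j^{-1} = \z a_j$ from relation~(5); all other conjugates are trivial. The equality $Q = NK$ is immediate since $N \cup K$ contains a generating set of $Q$.

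To conclude the isomorphism, I would construct the abstract semi-direct product $H_g = (\Z^3 \times \Z^{2g}) \rtimes \Z^{2g}$ using precisely the action just extracted, check that it satisfies every defining relation of Lemma~\ref{lem:presgamma}, and thus obtain a surjection $\phi : Q \to H_g$. In the opposite direction, sending the abstract generators of $H_g$ to the corresponding elements of $Q$ is compatible with the semi-direct product structure by construction, so it defines a homomorphism $\psi : H_g \to Q$. Since $\phi$ and $\psi$ are inverse to each other on a generating set, they are mutually inverse isomorphisms.

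The main subtlety is making sure that no hidden relation in $Q$ identifies monomials in the generators of $N$ or $K$ --- that is, that each factor has the expected free abelian rank. This is handled automatically by the existence of $\psi : H_g \to Q$: the generators of $H_g$ are manifestly independent in the abstract semi-direct product, and the fact that $\psi$ is well defined forces the generators of $N$ and of $K$ to be independent in $Q$ as well, ruling out any collapse.
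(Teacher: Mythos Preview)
Your argument is correct and is precisely the verification the paper has in mind: the corollary is stated without proof, as an immediate consequence of the presentation in Lemma~\ref{lem:presgamma}, with the action read off from the commutator relations and the normal form $\sigma^p \tsi^q \zeta^r  \prod a_i^{m_i}  \tia_i^{M_i} \prod b_i^{n_i}  \tib_i^{N_i}$ noted just after the statement. Your explicit construction of mutually inverse homomorphisms $\phi$ and $\psi$ is exactly how one makes this rigorous, and you are right to observe that $[b_i,b_j]=1$ and $[\tib_i,\tib_j]=1$ are not listed among the relations of Lemma~\ref{lem:presgamma} and must be recovered from relation~\reqref{cddcg} in the original presentation.

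One small point: the reasoning in your final paragraph is stated in the wrong direction. The existence of $\psi:H_g\to Q$ says nothing about independence of the generators in $Q$; a homomorphism can always collapse its target further. What actually rules out hidden relations in $Q$ is the map $\phi:Q\to H_g$: since $\phi$ sends the generators of $N$ and $K$ to elements that are manifestly independent in the abstract semi-direct product, those generators must already be independent in $Q$. This does not affect the validity of your proof, since the mutual-inverse argument in the preceding paragraph is already complete; but the explanatory gloss should invoke $\phi$ rather than $\psi$.
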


\noindent The actions defining the above semi-direct product are as described in Lemma~\ref{lem:presgamma}. Then 
$\bsurf{k,n}{g}{}/\Gamma_3(\bsurf{k,n}{g}{})$ can be seen as a central extension of $\Z^{4g}$ by $\Z^3$.
We remark that the action of any generator on the previous ones is trivial except in at most two cases: for instance the only non-trivial actions of
$\tia_j$ are $\tia_j  \tib_j \tia_j^{-1}=\tsi^{2} \tib_j$ and $\tia_j  b_j \tia_j^{-1}=\zeta^{-2}b_j$. 
Finally, notice that every element $w$ of $\bsurf{k,n}{g}{}/\Gamma_3(\bsurf{k,n}{g}{})$ may be written  in a unique way in the form
$w= \sigma^p \tsi^q \zeta^r  \prod_{i=1}^g a_i^{m_i}  \tia_i^{M_i} \prod_{i=1}^g b_i^{n_i}  \tib_i^{N_i}$.


\section{Mixed surface braid sequences and representations of  surface braid groups}  \label{section5}


As in previous sections, we will focus on connected orientable surfaces  of positive genus
with a single  boundary component. 
One might ask whether it is possible to use the short exact sequence $(MSB)$ to obtain representations of $B_n(\surf{g}{})$ which are extensions
of Bigelow-Krammer-Lawrence representations of $B_n$. First, let us denote by $\iota_n: B_n \to B_n(\surf{g}{})$ and $\iota_{k,n}: B_k(\D_n) \to B_k(\surf{g}{, n})$
the homomorphisms induced by the inclusion of $\D^2$ into $\surf{g}{}$ and of $\D_n$ into $\surf{g}{, n}$ respectively. We recall that they are injective  (see for instance \cite{PR}). 
  
  \begin{rem}\label{rem_embed}
  In particular $B_k(\D_n)$ is generated as a subgroup of  $B_k(\surf{g}{, n})$ by $\sigma_1,\ldots,\sigma_{k-1}$
  and $\zeta_1, \ldots, \zeta_{n}$ (see for instance \cite{BB, B}).
  \end{rem}
  
Given $\beta \in B_n$, we denote the induced action by conjugation of $\beta$ on $B_{k}(\D_n)$, and abusing notation, 
the action of $\beta$ (regarded as an element of $B_n(\surf{g}{})$) on $B_k(\surf{g}{, n})$, by $\beta_*$.
As in Section~2, for the case $k=1$  we consider the length function $p_1: B_1(\D_n) \to G_1=\Z=\langle  t \rangle$, while for $k>1$ we set  $G_k$ to be  the group $\Z^2=\langle q, t\rangle$.
The corresponding homomorphism $p_k: B_k(\D_n) \to G_k$ for $k>1$  sends $\sigma_1, \ldots, \sigma_k$ to $q$ and $\zeta_1, \ldots, \zeta_n$ to $t$.
As we mentioned in Section~2, the fact that  the action of $B_n$ on 
$B_k(\D_n)$ commutes with $p_k:  B_k(\D_n) \to G_k$ implies that  $B_n$ acts on a regular covering.
For $k=1$ the induced action on homology gives rise to  the Burau representation, while for $k>1$ we obtain faithful linear representations of $B_n$.

 \begin{defin} \label{fonddef}
  Let  $\qk: B_k(\surf{g}{, n}) \to G_k(\surf{g}{})$ be a surjective homomorphism. We will say that the homomorphism $\qk$ is a lifting extension of the map $p_k:  B_{k}(\D_n)
  \to G_k$ defined above
  if there exists an injective homomorphism  $\bar\iota_k: G_k \to G_k(\surf{g}{})$ such that 
  $\qk\circ \iota_{k,n}= \bar\iota_k \circ p_k$,
  and if further for all  $\beta \in B_n(\surf{g}{})$ there exists a homomorphism $\bar\beta_*: G_k(\surf{g}{}) \to G_k(\surf{g}{})$
  such that $\bar\beta_* \circ \qk= \qk  \circ \beta_* $. 
  \end{defin}
  
  The second condition means that there is an induced  action of $B_n(\surf{g}{})$ on the homology of the covering space of $\FF_k(\surf{g}{, n})/S_k$.
  In other words, the surjection $\qk$ is a lifting extension of $p_k$ if the following diagram commutes for any $\beta \in B_n$:
\begin{equation} \label{eq:commsequence}
\begin{xy}*!C\xybox{%
\xymatrix{%
 G_k  \ar@{=}[ddd]^{Id} \ar@{^{(}->}[rrr]^{\bar\iota_k}  &  & & G_k(\surf{g}{}) \ar[ddd]^{\bar\beta_*}\\ 
& B_{k}(\D_n) \ar@{->>}[lu]^{p_k}   \ar@{^{(}->}[r] \ar[d]^{\beta_*} & B_k(\surf{g}{, n}) \ar@{->>}[ru]^{\qk}  \ar[d]^{\beta_*} & \\
 & B_{k}(\D_n) \ar@{->>}[ld]^{p_k}   \ar@{^{(}->}[r]  & B_k(\surf{g}{, n}) \ar@{->>}[rd]^{\qk}   & \\  
 G_k     \ar@{^{(}->}[rrr]^{\bar\iota_k} &  &  &G_k(\surf{g}{})}}
\end{xy}
\end{equation}
We remark that the middle homology group $H_k^{BM} \left( \displaystyle \frac{\widetilde{\FF_k(\surf{g}{, n})}}{S_k} \right)$ of the covering space   $\displaystyle \frac{\widetilde{\FF_k(\surf{g}{, n})}}{S_k}$
of $\displaystyle \frac{\FF_k(\surf{g}{, n})}{S_k}$ is a free $\Z[G_k(\surf{g}{})]$-module (see Lemma~3.3
of \cite{HK}) and that some $\beta \in  B_n(\surf{g}{})$ acts as a $\Z[G_k(\surf{g}{})]$-module morphism of  $H_k^{BM}(\widetilde{\FF_k(\surf{g}{, n})/S_k})$
if and only if the  map $\bar\beta_*: G_k(\surf{g}{}) \to G_k(\surf{g}{})$ defined above is the identity homomorphism (see Section~2 of \cite{HK}).
If this property holds for all $\beta \in  B_n(\surf{g}{})$ we thus obtain a representation of $B_n(\surf{g}{})$ in $Aut_{\Z[G_k(\surf{g}{})]}(H_k^{BM}\left( \displaystyle \frac{\widetilde{\FF_k(\surf{g}{, n})}}{S_k} \right) )$ which is a linear representation if $G_k(\surf{g}{})$ is Abelian. The above discussion gives rise naturally to the following definition, which provides a notion of extension of the
Bigelow-Krammer-Lawrence representations  from  $B_n$ to $B_n(\surf{g}{})$.

\begin{defin}
  Let  $\qk: B_k(\surf{g}{, n})  \to G_k(\surf{g}{})$ be  a lifting extension of the homomorphism $p_k:  B_{k}(\D_n)
  \to G_k$. The homomorphism  $\qk$ is said to be a linear extension of $p_k$ if for any $\beta \in B_n(\surf{g}{})$, we have that  $\bar\beta_*=Id$
  and that $G_k(\surf{g}{})$ is Abelian. 
 \end{defin}

The following proposition states that it is not in fact possible to extend the Bigelow-Krammer-Lawrence representations from 
$B_n$ to $B_n(\surf{g}{})$.

\begin{prop} \label{nolinearext}
There is no homomorphism $\qk: B_k(\surf{g}{, n})  \to G_k(\surf{g}{})$ that is a linear extension of $p_k:  B_{k}(\D_n)
  \to G_k$. 
\end{prop}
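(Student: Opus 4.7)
The plan is to argue by contradiction: assume that a linear extension $\qk:B_k(\surf{g}{,n})\to G_k(\surf{g}{})$ of $p_k$ exists, and exhibit a single conjugation identity in $\bsurf{k,n}{g}{}$ that forces $\qk(\zeta_1)=1$, contradicting the injectivity of $\bar\iota_k$.

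First, I would observe that since $\qk$ is a lifting extension, the identity $\qk\circ\iota_{k,n}=\bar\iota_k\circ p_k$, together with the injectivity of $\bar\iota_k$ and with $p_k(\zeta_1)=t\neq 1$ in $G_k$, forces $\qk(\zeta_1)=\bar\iota_k(t)\neq 1$ in $G_k(\surf{g}{})$. This is the non-triviality that the rest of the argument aims to contradict.

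Next, the linearity hypothesis gives $\bar\beta_*=\mathrm{id}$ for every $\beta\in B_n(\surf{g}{})$, so the commutativity of diagram~(\ref{eq:commsequence}) reduces to the invariance $\qk(\beta_*x)=\qk(x)$ for every $\beta\in B_n(\surf{g}{})$ and every $x\in B_k(\surf{g}{,n})$. I would then apply this to the relation from Proposition~\ref{prop:presMB}(c)(v),
\[
\tia_i\, b_i\, \tia_i^{-1}= b_i\,\zeta_1,
\]
in which $\tia_i$ serves as a coset representative in $\bsurf{k,n}{g}{}$ of an element of $B_n(\surf{g}{})$ (via the splitting of (MSB)) and $b_i\in B_k(\surf{g}{,n})$: taking $\beta=\tia_i$ and $x=b_i$ and using that $\qk$ is a homomorphism yields $\qk(b_i)=\qk((\tia_i)_*b_i)=\qk(b_i\,\zeta_1)=\qk(b_i)\,\qk(\zeta_1)$, whence $\qk(\zeta_1)=1$, contradicting the first step.

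The argument is entirely formal once the relation above has been identified; note that the Abelianness of $G_k(\surf{g}{})$ plays no role --- only the homomorphism property of $\qk$ and the triviality of $\bar\beta_*$ are used. The main obstacle is therefore to locate, among the many defining relations of $\bsurf{k,n}{g}{}$ listed in Proposition~\ref{prop:presMB}, a relation of the schematic form $\tilde\gamma\cdot x\cdot\tilde\gamma^{-1}=x\cdot\zeta_j^{\pm1}$ with $\tilde\gamma\in\widetilde{AB}$ acting non-trivially on a kernel generator $x$. Such a relation has no counterpart in the planar case $\surf{g}{}=\D^2$, where the generators $a_i, b_i, \tia_i, \tib_i$ are absent --- which is precisely why the Bigelow--Krammer--Lawrence construction succeeds there yet cannot be extended naively to a surface of positive genus.
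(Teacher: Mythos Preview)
Your argument is correct and follows essentially the same strategy as the paper's proof: both derive a contradiction by applying the invariance condition $\qk(\beta_* x)=\qk(x)$ to a single conjugation relation from Proposition~\ref{prop:presMB}(c) that produces a stray factor of $\zeta_1$, thereby forcing $\qk(\zeta_1)=1$ against the injectivity of $\bar\iota_k$. The only difference is cosmetic: the paper uses the action of $\tib_j$ on $a_i$ (the relation $\tib_i a_i\tib_i^{-1}=\zeta_1^{-1}a_i[b_i^{-1},\zeta_1^{-1}]$), whereas you use the action of $\tia_i$ on $b_i$ (the relation $\tia_i b_i\tia_i^{-1}=b_i\zeta_1$); your choice is in fact slightly cleaner since it avoids the commutator term and, as you note, does not even require the Abelian hypothesis on $G_k(\surf{g}{})$.
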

\begin{proof}
This result is a reformulation of Lemma~2.6 of \cite{HK}. We sketch the proof. The action by conjugation of $B_n(\surf{g}{})$
on $B_k(\surf{g}{, n})$  is described in \repr{presMB}, where  $B_k(\surf{g}{, n})$
is the subgroup of $B_{k,n}(\surf{g}{})$ generated by $S, AB$ and $Z$. For any generator $g$, let us denote  its image in $G_k(\surf{g}{})$ by $[g]$,
and we consider $\tib_j$ as a generator of $B_n(\surf{g}{})$.
Since $\qk$ is a lifting extension of $p_k$, the action of $\tib_j$ by conjugation induces  the   homomorphism  $\overline{(\tib_j)_*}: G_k(\surf{g}{}) \to G_k(\surf{g}{})$, which sends the element $[a_i]$ to  $[a_i][\zeta_1]$.  Since  $\qk$ is also a linear extension,  $\overline{(\tib_j)_*}$ must coincide with the identity. One deduces that $[\zeta_1]=1$, but this cannot be true since  the hypothesis that $\qk$ is a lifting extension of $p_k$ implies that $[\zeta_1]=q$.
\end{proof}

In what follows we show that for $k\ge3$, there is a unique group  $G_k(\surf{g}{})$ such that one may define 
 a lifting extension  $\qk: B_k(\surf{g}{, n})  \to G_k(\surf{g}{})$ 
 of $p_k$.  We first characterise  the group $G_k(\surf{g}{})$ 
 which was defined in Section~3.1 of \cite{HK} (with the notation $G_\Sigma$) in terms of lower central series. We then show that this group
 is the unique group admitting a lifting extension for $p_k$, and we construct the corresponding homomorphism 
 $\qk: B_k(\surf{g}{, n})  \to G_k(\surf{g}{})$.

Consider the following diagram:
\begin{equation}\label{diagramexactsequence}
\begin{xy}*!C\xybox{%
\xymatrix{
1 \ar[r]  &  B_k(\surf{g}{, n})  \ar[d]^{\bqk}   \ar[r]    & B_{k,n}(\surf{g}{})  \ar[d]^{r_{k,n}} \ar[r]          &  B_n(\surf{g}{})   \ar[d]^{r_{n}} \ar[r]        & 1\\
1 \ar[r]  &  \ker \bar\psi_k \ar[r]                       & B_{k,n}(\surf{g}{})/ \Gamma_3(B_{k,n}(\surf{g}{}))  \ar[r]^{ \bar\psi_k} &  B_n(\surf{g}{})/ \Gamma_3(B_n(\surf{g}{}))  \ar[r]   & 1}}
\end{xy}
\end{equation}
where, by abuse of notation, $r_{k,n}$ and $r_n$ denote the canonical projections and $\bar\psi_k$ is the map such that 
$\bar\psi_k \circ r_{k,n}=r_n \circ \psi_k$. 
Following  \relem{presgamma}, we denote  
the generators of $\MB{k}{\surf{g}{}}/\Gamma_3(\MB{k}{\surf{g}{}})$ by $\si, \tsi, \zeta, a_1, b_1, \ldots, a_g, b_g, \tia_1, \tib_1, \ldots, \tia_g, \tib_g$.

\begin{prop} \label{fondhk}
The group $\ker \bar\psi_k$ admits the following presentation:\\

\noindent\textbf{Generating set:} $\sigma, \zeta, a_1, $ $ b_1, \ldots, a_g, b_g$;\\
\textbf{Relations:} 
\begin{enumerate}
\item $ [\sigma,a_i] = [\sigma,b_i] = 1$;
\item $[a_i,a_j] = [a_i,b_j] = 1$ for  $i\neq j$ and  $[a_j,b_j] = \sigma^2$;
\item $ [\zeta,a_i] = [\zeta,b_i] = [\zeta,\sigma] = 1$.
\end{enumerate}
\end{prop}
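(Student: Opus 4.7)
The plan is to leverage the presentation of $\bsurf{k,n}{g}{}/\Gamma_3(\bsurf{k,n}{g}{})$ from Lemma~\ref{lem:presgamma} together with the normal form provided by Corollary~\ref{cor:decgammamix}, comparing it with the normal form in $B_n(\surf{g}{})/\Gamma_3(B_n(\surf{g}{}))$ from Corollary~\ref{cor:decgamma}. First I would identify $\bar\psi_k$ explicitly on generators: since the splitting of~(MSB) sends the generators of $B_n(\surf{g}{})$ to $\tilde S\cup \widetilde{AB}$, the map $\bar\psi_k$ sends $\tsi\mapsto \bar\sigma$, $\tia_i\mapsto \bar a_i$, $\tib_i\mapsto \bar b_i$, while $\sigma, \zeta, a_i, b_i$ (which come from generators of $\bsurf{k}{g}{,n}$) all map to the identity.

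Next I would use the unique normal form of Corollary~\ref{cor:decgammamix}: every element of $\bsurf{k,n}{g}{}/\Gamma_3(\bsurf{k,n}{g}{})$ can be written uniquely as
\[
w=\sigma^{p}\tsi^{q}\zeta^{r}\prod_{i=1}^{g}a_i^{m_i}\tia_i^{M_i}\prod_{i=1}^{g}b_i^{n_i}\tib_i^{N_i}.
\]
Applying $\bar\psi_k$ yields $\bar\sigma^{q}\prod \bar a_i^{M_i}\prod \bar b_i^{N_i}$, which, by the uniqueness of the normal form in $B_n(\surf{g}{})/\Gamma_3(B_n(\surf{g}{}))$ given in Corollary~\ref{cor:decgamma}, is trivial if and only if $q=M_i=N_i=0$ for all $i$. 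Hence $\ker\bar\psi_k$ consists precisely of the elements of the form $\sigma^{p}\zeta^{r}\prod a_i^{m_i}\prod b_i^{n_i}$ and is generated by $\sigma, \zeta, a_1, b_1,\ldots, a_g, b_g$. Inspecting Lemma~\ref{lem:presgamma}, I see that each relation listed in the statement of the proposition holds among these generators inside $\bsurf{k,n}{g}{}/\Gamma_3(\bsurf{k,n}{g}{})$: $\sigma$ and $\zeta$ are central, the $a_i, b_i$ commute pairwise except for $[a_j, b_j]=\sigma^{2}$. Consequently there is a surjective homomorphism $\phi\colon K\twoheadrightarrow \ker\bar\psi_k$, where $K$ denotes the group defined by the presentation in the statement.

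It remains to show that $\phi$ is injective. From the defining relations of $K$, $\sigma$ and $\zeta$ are central, and the commutator $[a_i,b_j]$ is either trivial or equal to $\sigma^{2}$, which is itself central. Iterating the identity $[a,bc]=[a,b]\,b[a,c]b^{-1}$ one checks that every element of $K$ can be brought to the form $\sigma^{p}\zeta^{r}\prod a_i^{m_i}\prod b_i^{n_i}$. The homomorphism $\phi$ sends such a word to the corresponding element of $\ker\bar\psi_k$, which coincides with the normal form of Corollary~\ref{cor:decgammamix} (with $q=M_i=N_i=0$); distinct tuples $(p,r,m_i,n_i)$ therefore give distinct elements of $\ker\bar\psi_k$, and hence distinct elements of $K$. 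This proves $\phi$ is a bijection.

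The main subtlety will be the last step: convincing oneself that no further relations are hidden in $K$ beyond those used to produce the normal form. The compatibility with the already-established normal form in $\bsurf{k,n}{g}{}/\Gamma_3(\bsurf{k,n}{g}{})$ handles this cleanly, so no independent computation inside $K$ is required. Everything else reduces to extracting consequences of Lemma~\ref{lem:presgamma} and the two corollaries on normal forms.
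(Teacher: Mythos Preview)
Your proof is correct and follows essentially the same strategy as the paper's: both identify the abstract group with the given presentation and verify that the obvious surjection onto $\ker\bar\psi_k$ is injective by comparing normal forms inside $\bsurf{k,n}{g}{}/\Gamma_3(\bsurf{k,n}{g}{})$. The only minor difference is in how you pin down $\ker\bar\psi_k$ itself: the paper argues that the subgroup generated by $\sigma,\zeta,a_i,b_i$ is normal and that the quotient by it is isomorphic to $B_n(\surf{g}{})/\Gamma_3(B_n(\surf{g}{}))$, whereas you compute $\bar\psi_k$ directly on the normal form of Corollary~\ref{cor:decgammamix} and read off the kernel from the vanishing of the exponents $q,M_i,N_i$. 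Your route is slightly more direct but relies on the same ingredients.
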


\begin{proof}
Let us denote  the subgroup of $B_{k,n}(\surf{g}{})/ \Gamma_3(B_{k,n}(\surf{g}{}))$ 
generated by  $\sigma, \zeta, a_1, $ $ b_1, \ldots,$ $a_g, b_g$ by $K_g$.
From diagram (\ref{diagramexactsequence}),  one deduces that  the group $K_g$ 
belongs to $\ker \bar\psi_k$.
By Lemma \ref{lem:presgamma},
 the group $K_g$ is normal in $B_{k,n}(\surf{g}{})/ \Gamma_3(B_{k,n}(\surf{g}{}))$.  A straightforward calculation shows that the quotient 
$B_{k,n}(\surf{g}{})/ \Gamma_3(B_{k,n}(\surf{g}{}))$ by $K_g$ is isomorphic to  $B_n(\surf{g}{})/ \Gamma_3(B_n(\surf{g}{}))$,
and therefore $K_g$ coincides with $\ker \bar\psi_k$.
To prove the proposition, it is therefore sufficient to check  that the given set of relations is a complete set of relations for $K_g$.
Let $D_g$ be the (abstract) group with the group presentation given in the statement. Let $j: D_g \to K_g$ be the (surjective) homomorphism sending every generator of $D_g$ to the corresponding generator of $K_g$, and let $\iota:K_g \to B_{k,n}(\surf{g}{})/ \Gamma_3(B_{k,n}(\surf{g}{}))$  be the (injective) homomorphism given in diagram
(\ref{diagramexactsequence}).
We claim that the composition $k= \iota \circ j$ is injective, and therefore $D_g$ coincides with $K_g$. In fact, 
it follows from the group presentation of $D_g$ that 
any element  $w \in  D_g$ can be written  (in a unique way) as $w=\sigma^p \zeta^q \prod_{j=1}^{g} a_j^{m_j}  \prod_{j=1}^{g}  b_j^{n_j} $.  We call this decomposition of $w$ its normal form in $D_g$. On the other hand, any element  $w' \in  B_{k,n}(\surf{g}{})/ \Gamma_3(B_{k,n}(\surf{g}{})) $ may be written  uniquely as
$w'=\sigma^p \zeta^q \tsi^r \prod_{j=1}^{g}  (a_j^{m_j} \tia_j^{M_j}) \prod_{j=1}^{g} (b_j^{n_j}\tib_j^{N_j}) $. We call this decomposition of $w'$ its normal form in $B_{k,n}(\surf{g}{})/ \Gamma_3(B_{k,n}(\surf{g}{}))$.    Let  $w\in D_g$ be written in its normal form. From the definition of $k: D_g \to B_{k,n}(\surf{g}{})/ \Gamma_3(B_{k,n}(\surf{g}{}))$,
$k(w)$ coincides with its normal form in $B_{k,n}(\surf{g}{})/ \Gamma_3(B_{k,n}(\surf{g}{}))$. Therefore $k(w)=1$ implies that $w=1$.
\end{proof}

\begin{rem}\label{remfond}
Let $k\ge 3$. Then the group~$\ker \bar\psi_k$ is the group $G_\Sigma$ introduced in~\cite[Section~3]{HK}. The homomorphism~$\qk: B_k(\surf{g}{, n})\to \ker \bar\psi_k$ is the homomorphism $\varPhi_\Sigma: B_k(\surf{g}{, n})\to G_\Sigma$ also defined in~\cite[Section~3]{HK}.
\end{rem}

The homomorphism $\varPhi_\Sigma: B_k(\surf{g}{, n})\to G_\Sigma$ and the group $G_\Sigma$ defined in Section~3 of \cite{HK} were constructed in a technical manner in order to obtain
(non-linear) representations of surface braid groups subject to certain homological constraints (Definition 2.2 of  \cite{HK}).   Proposition \ref{fondhk} and Remark \ref{remfond} 
show that such technical constructions coincide with  objects arising in the lower central series. On the other hand, reinterpreting Lemma 3.1 and Theorem 4.3 of \cite{HK},
we conclude that  the unique  lifting extension of $p_k:  B_{k}(\D_n)$ to $B_k(\surf{g}{, n})$ arises from the lower central series and the  homomorphism  $\qk: B_k(\surf{g}{, n})\to \ker \bar\psi_k$, as we shall now explain in the following two propositions.

\begin{prop} \label{fondpropex}
Let $k\ge 3$, and let $\bar\psi_k: B_k(\surf{g}{, n})  \to \ker \bar\psi_k$ be the homomorphism defined above.
The homomorphism  $\bar\psi_k$ lifts to an action on the level of regular coverings.
\end{prop}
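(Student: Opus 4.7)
The strategy is to interpret the homomorphism $\bqk: B_k(\surf{g}{,n}) \to \ker\bar\psi_k$ of diagram~(\ref{diagramexactsequence}) as the restriction to $B_k(\surf{g}{,n})$ of the canonical projection $r_{k,n}: B_{k,n}(\surf{g}{}) \to B_{k,n}(\surf{g}{})/\Gamma_3(B_{k,n}(\surf{g}{}))$, and to transport the conjugation action of $B_n(\surf{g}{})$ on $B_k(\surf{g}{,n})$ down to $\ker\bar\psi_k$ via this quotient. Two observations drive the argument: $\Gamma_3(B_{k,n}(\surf{g}{}))$ is a characteristic subgroup of $B_{k,n}(\surf{g}{})$, and $\ker\bar\psi_k$ is normal in the quotient $B_{k,n}(\surf{g}{})/\Gamma_3(B_{k,n}(\surf{g}{}))$ as the kernel of a group homomorphism.

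Concretely, I would first fix a section $s: B_n(\surf{g}{}) \to B_{k,n}(\surf{g}{})$ of (MSB), which exists by Lemma~\ref{lem:fibrations}(b) since $\surf{g}{}$ has non-empty boundary. By definition, $\beta_*$ is conjugation by $s(\beta)$ on the normal subgroup $B_k(\surf{g}{,n})$. Conjugation by $s(\beta)$ on all of $B_{k,n}(\surf{g}{})$ preserves the characteristic subgroup $\Gamma_3(B_{k,n}(\surf{g}{}))$, and so descends to conjugation by $r_{k,n}(s(\beta))$ on the quotient $B_{k,n}(\surf{g}{})/\Gamma_3(B_{k,n}(\surf{g}{}))$. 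Being an inner automorphism of that quotient, this descended conjugation preserves the normal subgroup $\ker\bar\psi_k$, and its restriction there is the sought-after automorphism $\bar\beta_*$.

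The compatibility $\bar\beta_* \circ \bqk = \bqk \circ \beta_*$ demanded by Definition~\ref{fonddef} then reduces to the identity
\begin{equation*}
r_{k,n}(s(\beta))\cdot r_{k,n}(x)\cdot r_{k,n}(s(\beta))^{-1} = r_{k,n}\bigl(s(\beta)\, x\, s(\beta)^{-1}\bigr)
\end{equation*}
for $x \in B_k(\surf{g}{,n})$, and the assignment $\beta\mapsto \bar\beta_*$ is a group homomorphism because $s$ and $r_{k,n}$ are. This gives the desired action of $B_n(\surf{g}{})$ on the regular covering determined by $\bqk$.

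No serious obstacle is foreseen: the argument is essentially a diagram chase in~(\ref{diagramexactsequence}) combined with the normal/characteristic bookkeeping above. The non-trivial content has been absorbed into Proposition~\ref{fondhk}, which extracts the presentation of $\ker\bar\psi_k$ from the lower central series data of the mixed surface braid group; the present proposition records the formal consequence of that presentation for representations.
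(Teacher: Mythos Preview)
Your argument is correct, and it takes a genuinely different route from the paper. The paper's proof is essentially an appeal to the literature: having identified $\ker\bar\psi_k$ with An--Ko's group $G_\Sigma$ and $\bqk$ with their homomorphism $\varPhi_\Sigma$ (Remark~\ref{remfond}), it simply invokes Lemma~3.1 of~\cite{HK}, where the existence of $\bar\beta_*$ is verified by hand from the explicit presentation. Your approach, by contrast, is intrinsic and presentation-free: you observe that since $\bqk$ is the restriction of the canonical projection $r_{k,n}$ modulo the \emph{characteristic} subgroup $\Gamma_3(B_{k,n}(\surf{g}{}))$, the conjugation action of $B_n(\surf{g}{})$ via any section automatically descends, and then preserves $\ker\bar\psi_k$ by normality. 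This buys you a self-contained proof that explains \emph{why} the lower-central-series construction inevitably produces a lifting extension---the compatibility with $\beta_*$ is built into the functoriality of $\Gamma_3$, rather than being a coincidence to be checked on generators. The paper's route, on the other hand, makes the link to~\cite{HK} explicit, which is one of the stated goals of the section. One minor remark: the homomorphism $\bar\beta_*$ you construct depends on the choice of section $s$ (different sections differ by an element of $\ker\bar\psi_k$, which is non-abelian), but Definition~\ref{fonddef} only demands existence, so this causes no trouble.
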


\begin{proof}
We recall that  there is an induced  action of $B_n(\surf{g}{})$ on the homology of the covering space of $\FF_k(\surf{g}{, n})/S_k$ $\beta \in B_n(\surf{g}{})$ if there exists a homomorphism $\bar\beta_*: G_k(\surf{g}{}) \to G_k(\surf{g}{})$
  such that $\bar\beta_* \circ \bar\psi_k= \bar\psi_k  \circ \beta_* $.  As we remarked previously, we may replace 
 the homomorphism  $\qk: B_k(\surf{g}{, n})\to \ker \bar\psi_k$ and the group
$\ker \bar\psi_k$  respectively by the homomorphism $\varPhi_\Sigma: B_k(\surf{g}{, n})\to G_\Sigma$ and the group $G_\Sigma$ given in Section 3 of \cite{HK}. The statement follows from Lemma 3.1 of  \cite{HK}. 
\end{proof}

\begin{prop} \label{fondprop}
Let $k\ge 3$.
\begin{enumerate}[(i)]
\item The homomorphism  $\bar\psi_k: B_k(\surf{g}{, n})  \to \ker \bar\psi_k$ is a lifting extension of $p_k:  B_{k}(\D_n)
  \to G_k$;
  \item let $\qk: B_k(\surf{g}{, n})  \to G_k(\surf{g}{})$ be a lifting extension of $p_k:  B_{k}(\D_n)
  \to G_k$.  There is a canonical   isomorphism $\iota:G_k(\surf{g}{}) \to \ker \bar\psi_k$ such that  $\iota \circ \qk$ sends any generator
  of $B_k(\surf{g}{, n})$ to the corresponding coset of $\MB{k}{\surf{g}{}}/\Gamma_3(\MB{k}{\surf{g}{}})$.
\end{enumerate} 
\end{prop}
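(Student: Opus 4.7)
For part (i), the plan is to construct the injective homomorphism $\bar\iota_k \colon G_k = \Z^2 \to \ker\bar\psi_k$ by sending $q \mapsto \sigma$ and $t \mapsto \zeta$: this is well-defined because $\sigma$ and $\zeta$ commute in $\ker\bar\psi_k$ by Proposition~\ref{fondhk}, and injective by the normal form on $\ker\bar\psi_k$ established in the proof of that proposition. The identity $\bar\iota_k \circ p_k = \bar\psi_k \circ \iota_{k,n}$ is then checked on the generators $\sigma_1, \dots, \sigma_{k-1}, \zeta_1, \dots, \zeta_n$ of $B_k(\D_n)$ (see Remark~\ref{rem_embed}), where it follows immediately from Lemma~\ref{lem:presgamma}. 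The second condition in Definition~\ref{fonddef}, the existence of $\bar\beta_*$ for each $\beta \in B_n(\surf{g}{})$, is exactly Proposition~\ref{fondpropex}.

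For part (ii), given an arbitrary lifting extension $\qk$, put $\bar\sigma = \qk(\sigma_1)$, $\bar\zeta = \qk(\zeta_1)$, $\bar a_i = \qk(a_i)$, $\bar b_i = \qk(b_i)$. The first lifting axiom forces $\qk(\sigma_i) = \bar\sigma$ for all $i$, $\qk(\zeta_j) = \bar\zeta$ for all $j$, and $\langle\bar\sigma, \bar\zeta\rangle \cong \Z^2$. The next step is to derive the complete list of relations of Proposition~\ref{fondhk} in $G_k(\surf{g}{})$: the relations $[\bar\sigma, \bar a_i] = [\bar\sigma, \bar b_i] = [\bar\sigma, \bar\zeta] = 1$ follow by applying $\qk$ to the surface braid group system relations $c_i \sigma_j = \sigma_j c_i$ (for $j \geq 2$) and $\zeta_j \sigma_i = \sigma_i \zeta_j$ (for $i \geq 2$), which is where the hypothesis $k \geq 3$ enters; once $\bar\sigma$ is known to be central, the remaining relations $[\bar\zeta, \bar a_i] = [\bar\zeta, \bar b_i] = 1$, $[\bar a_i, \bar a_j] = [\bar a_i, \bar b_j] = [\bar b_i, \bar b_j] = 1$ for $i \neq j$, and $[\bar a_j, \bar b_j] = \bar\sigma^2$ drop out of the remaining system relations by conjugating by $\sigma_1$. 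This gives a well-defined surjective homomorphism $\kappa \colon \ker\bar\psi_k \twoheadrightarrow G_k(\surf{g}{})$ with $\kappa \circ \bar\psi_k = \qk$ on generators.

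The main obstacle is proving that $\kappa$ is injective; this is where the full lifting property, not merely the homomorphism property of $\qk$, must be invoked. Suppose $w = \sigma^p \zeta^q \prod_i a_i^{m_i} \prod_i b_i^{n_i}$ belongs to $\ker\kappa$, that is, $\bar\sigma^p \bar\zeta^q \prod_i \bar a_i^{m_i} \prod_i \bar b_i^{n_i} = 1$ in $G_k(\surf{g}{})$. Using Proposition~\ref{prop:presMB} together with centrality of $\zeta$ in $\bsurf{k,n}{g}{}/\Gamma_3(\bsurf{k,n}{g}{})$, a direct calculation shows that conjugation by $\tib_i$ fixes every generator of $\ker\bar\psi_k$ except $a_i$, which is sent to $\zeta^{-1} a_i$; symmetrically, conjugation by $\tia_i$ fixes all generators except $b_i$, which is sent to $b_i \zeta$. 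By the lifting hypothesis these descend to automorphisms $\bar{(\tib_i)_*}$ and $\bar{(\tia_i)_*}$ of $G_k(\surf{g}{})$. Applying $\bar{(\tib_i)_*}$ to the relation above and cancelling with the original yields $\bar\zeta^{-m_i} = 1$, whence $m_i = 0$ by injectivity of $\bar\iota_k$; applying $\bar{(\tia_i)_*}$ similarly yields $n_i = 0$. The residual relation $\bar\sigma^p \bar\zeta^q = 1$ then forces $p = q = 0$, once again by injectivity of $\bar\iota_k$. Thus $\kappa$ is an isomorphism and $\iota := \kappa^{-1}$ is the desired canonical isomorphism, uniquely determined by its action on generators.
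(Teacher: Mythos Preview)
Your proof is correct. Part~(i) matches the paper's argument exactly, as does the construction of the surjection $\kappa\colon \ker\bar\psi_k \twoheadrightarrow G_k(\surf{g}{})$ in part~(ii).

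The injectivity argument, however, takes a genuinely different route. The paper works \emph{internally} in $\ker\bar\psi_k$: for $w$ in normal form it computes $[w,a_r] = \sigma^{-2n_r}$ and $[w,b_r] = \sigma^{2m_r}$, then applies $\bar p_k$ (your $\kappa$) to obtain $q^{-2n_r}=q^{2m_r}=1$, forcing $m_r=n_r=0$ since $q$ has infinite order. Your argument instead invokes the second lifting axiom, using the induced automorphisms $\overline{(\tib_i)_*}$ and $\overline{(\tia_i)_*}$ of $G_k(\surf{g}{})$ to produce powers of $\bar\zeta$. Both work, but note that your claim that ``the full lifting property \dots\ must be invoked'' is not quite right: the paper's commutator trick avoids it entirely, and indeed the paper remarks after the proof that this yields a slightly stronger statement --- the conclusion holds for any surjection $h_k\colon B_k(\surf{g}{,n})\to H_k$ with $h_k(\sigma_j)=q$, $h_k(\zeta_l)=t$ and $\langle q,t\rangle$ torsion free, with no action hypothesis required. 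Your approach, by contrast, trades this extra generality for a more direct use of the full structure already present in Definition~\ref{fonddef}.
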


\begin{proof}
\begin{enumerate}[(i)]
\item In order to prove that $\bar\psi_k: B_k(\surf{g}{, n})  \to \ker \bar\psi_k$ is a lifting extension of $p_k:  B_{k}(\D_n)
  \to G_k$, it suffices to show that $\bar\psi_k(B_{k}(\D_n))$ is isomorphic to $\Z^2$  in $\ker \bar\psi_k$.
  This is immediate because $\bar\psi_k(\sigma_j)=\sigma$ for any $j=1,\ldots,k-1$, and
  $\bar\psi_k(\zeta_l)=\zeta$ for any $l=1,\ldots,n$. The claim then follows from Remark \ref{rem_embed}

\item
Let $\qk: B_k(\surf{g}{, n})  \to G_k(\surf{g}{})$ be a lifting extension of $p_k:  B_{k}(\D_n)
  \to G_k$.  Therefore $\qk (\sigma_j)=q$ for all $j=1,\ldots,k-1$, and
  $\qk (\zeta_l)=t$ for all $l=1,\ldots,n$, where $q,t \in G_k(\surf{g}{})$ generate a central subgroup isomorphic to 
  $\Z^2$.  One can check easily that these conditions imply that:
  \begin{itemize}
  \item  the images of $a_1, b_1, \ldots, a_g, b_g$
  commute with $q$ and $t$;
  \item $[\qk(a_i),\qk(b_j)]=1$ for $i \not=j$;
  \item $[\qk(a_i),\qk(b_i)]=q^2$.
  \end{itemize}
  
We can therefore define a  surjection $\bar{p}_k:\ker \bar\psi_k \to G_k(\surf{g}{})$ such that
  $\bar{p}_k(\sigma)=q, \bar{p}_k(\zeta)=t$ and $\bar{p}_k(a_i)=\qk(a_i)$, $\bar{p}_k(b_i)=\qk(b_i)$.
  
We will prove that $\bar{p}_k$ is actually an isomorphism. Let  $w \in \ker \bar\psi_k$,and consider $1\leq r\leq g$. By Proposition \ref{fondhk}, we observe that for every generator~$x$ of the presentation, we have $a_r x = x a_r$ except for $x = b_r$, in which case we have $ a_r b_r=\sigma^2 b_r a_r$. Therefore, if the normal form of $w$ is $\sigma^c   \zeta^d \prod_{j=1}^{g} b_j^{n_j} a_j^{m_j}$, then $a_rw = \sigma^{2n_r}wa_r$ and $[w,a_r]= \sigma^{-2n_r}$. Similarly, $[w,b_r]= \sigma^{2m_r}$. Now if $\bar{p}_k(w)=1$ then $\bar{p}_k([w,a_r])=  \bar{p}_k([w,b_r])= 1$. We obtain $\bar{p}_k(\sigma^{-2n_r})=q^{-2n_r}=1$ and $\bar{p}_k(\sigma^{-2m_r})=q^{-2m_r} = 1$, and thus $m_r = n_r = 0$. Since this is so for all $r$, we have $w = \sigma^c\zeta^d$ and $\bar{p}_k(w) = q^ct^d = 1$. Since $q,t$ generate a subgroup that is isomorphic to $\mathbb{Z}^2$, we get $c = d = 0$ and $w = 1$. Hence $\bar{p}_k$ is injective.
\end{enumerate}
\end{proof}

The cases $k=1,2$ are still open, mainly because we do not have a finite group presentation for 
$\ker \bar\psi_k$ and  $B_{k,n}(\surf{g}{})/ \Gamma_3(B_{k,n}(\surf{g}{}))$  in these cases. As in Proposition \ref{fondprop}, there is a natural surjection from $\ker \bar\psi_k$ onto the group $G_\Sigma$ considered in Section 3 of \cite{HK}, but it is not an isomorphism.

\begin{rem}
Notice that in the proof of Proposition \ref{fondprop}, we proved a slightly stronger result, i.e.\ that if
$H_k$ is a group for which there exists a surjection $h_k: B_k(\surf{g}{, n})  \to H_k$ satisfying $h_k (\sigma_j)=q$ for all $j=1,\ldots,k-1$ and
  $h_k (\zeta_l)=t$ for all $l=1,\ldots,n$ , where the subgroup $\langle q,t\rangle$ of $H_{k}$ is torsion free, then the group $H_k$ coincides with $\ker \bar\psi_k$.
\end{rem}

Using Lemma \ref{lem:presgamma}  and Theorem \ref{th:presbng}, one may easily adapt the proof of Proposition \ref{fondprop}  in order to obtain the following result:

\begin{prop} \label{fondpropcons}
Let $k,n\geq 3$. 

\begin{enumerate}[(i)]
\item Let $H$ be a group, and let $\lambda_{\surf{g}{}}$:
$ B_{n}(\surf{g}{}) \to H$ be a surjective homomorphism such that $\lambda_{\surf{g}{}}(B_n)$ is isomorphic to~$\mathbb{Z}$. Then there is an isomorphism $\iota: H \to B_{n}(\surf{g}{})/ \Gamma_3(B_{n}(\surf{g}{}))$ for which  $\iota\circ \lambda_{\surf{g}{}}$ sends every generator
  of $ B_{n}(\surf{g}{})$ to the corresponding class of $ B_{n}(\surf{g}{})/ \Gamma_3(B_{n}(\surf{g}{}))$.
  
\item Let $H$ be a group. Assume that~$h: B_{k,n}(\surf{g}{}) \to H$ is a surjective homomorphism such that the image of the subgroup generated by $\sigma_1,\ldots,\sigma_{k-1},\tsi_1,\cdots,$ $\tsi_{n-1}$ is isomorphic to~$\mathbb{Z}^2$. 

Then there is an isomorphism $\iota:H \to  \displaystyle \frac{B_{k,n}(\surf{g}{})}{ \Gamma_3(B_{k,n}(\surf{g}{}))}$ for which  $\iota \circ h$ sends any generator
  of $ B_{k,n}(\surf{g}{})$ to the corresponding coset of $ \displaystyle \frac{ B_{k,n}(\surf{g}{})}{ \Gamma_3(B_{k,n}(\surf{g}{})}$.
\end{enumerate} 
\end{prop}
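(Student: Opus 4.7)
The plan in each part is to adapt the proof of Proposition~\ref{fondprop}(ii): use the ambient group presentation (Theorem~\ref{th:presbng} for~(i), Proposition~\ref{prop:presMB} for~(ii)) to verify that the defining relations of the relevant $\Gamma_3$-quotient (Lemma~\ref{lem:presbng} or Lemma~\ref{lem:presgamma}) already hold in $H$, so that the canonical projection onto the $\Gamma_3$-quotient factors through a surjection onto $H$; the isomorphism $\iota$ is then obtained by inverting this surjection, which amounts to checking injectivity via the explicit normal form of Corollary~\ref{cor:decgamma} (resp.\ Corollary~\ref{cor:decgammamix}) and successive commutator computations.

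For~(i), the hypothesis $\lambda_{\surf{g}{}}(B_n)\cong\mathbb{Z}$ together with the braid relations~\eqref{eq:artin2} forces a common value $\bar\sigma:=\lambda_{\surf{g}{}}(\sigma_i)$ of infinite order, and relation~\eqref{eq:asjg} (available since $n\ge 3$) makes $\bar\sigma$ central in $H$. Relation~\eqref{eq:bsbg} then becomes vacuous, \eqref{eq:abbag} collapses to $[\lambda_{\surf{g}{}}(a_i),\lambda_{\surf{g}{}}(b_i)]=\bar\sigma^2$, and~\eqref{eq:cddcg} yields pairwise commutation of the $\lambda_{\surf{g}{}}(c_i)$ and $\lambda_{\surf{g}{}}(c_j)$ for $i\neq j$; these are exactly the relations of Lemma~\ref{lem:presbng}, hence $\lambda_{\surf{g}{}}$ descends to a surjection $\tilde\lambda\colon B_n(\surf{g}{})/\Gamma_3(B_n(\surf{g}{}))\twoheadrightarrow H$. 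For injectivity, an element $w=\sigma^p\prod a_i^{m_i}\prod b_i^{n_i}$ in the normal form of Corollary~\ref{cor:decgamma} with $\tilde\lambda(w)=1$ satisfies $[w,a_r]=\sigma^{-2n_r}$ and $[w,b_r]=\sigma^{2m_r}$ inside the quotient, so the infinite order of $\bar\sigma$ forces $m_r=n_r=0$ for all $r$, and then $\bar\sigma^p=1$ forces $p=0$.

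Part~(ii) follows the same pattern. The rank-two hypothesis provides common images $\bar\sigma$ and $\bar{\tilde\sigma}$ of infinite order, jointly generating a free abelian subgroup of rank two in $H$. Processing the relations of Proposition~\ref{prop:presMB} modulo $\Gamma_3$, one verifies in $H$ that $\bar\sigma$ and $\bar{\tilde\sigma}$ are central, that $[h(a_i),h(b_i)]=\bar\sigma^2$ and $[h(\tia_i),h(\tib_i)]=\bar{\tilde\sigma}^2$, and that the two families of commutators $[h(b_i),h(\tia_i)]$ and $[h(\tib_j),h(a_j)]$ collapse to a single central element $\bar\zeta$. This yields all the defining relations of Lemma~\ref{lem:presgamma}, hence a surjection $\tilde h\colon \bsurf{k,n}{g}{}/\Gamma_3(\bsurf{k,n}{g}{})\twoheadrightarrow H$. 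Writing $w$ in the normal form $\sigma^p\tsi^q\zeta^s\prod a_i^{m_i}\tia_i^{M_i}\prod b_i^{n_i}\tib_i^{N_i}$ and exploiting the bilinearity of commutators in a $\Gamma_3$-quotient, the identities
\begin{align*}
[w,a_r]&=\sigma^{-2n_r}\zeta^{N_r}, & [w,b_r]&=\sigma^{2m_r}\zeta^{-M_r},\\
[w,\tia_r]&=\zeta^{n_r}\tsi^{-2N_r}, & [w,\tib_r]&=\zeta^{-m_r}\tsi^{2M_r}
\end{align*}
hold in the quotient. Combining $[w,a_r]$ with $[w,\tia_r]$ raised to suitable powers and using the freeness of $\langle\bar\sigma,\bar{\tilde\sigma}\rangle$ forces $n_r=N_r=0$; the pair $([w,b_r],[w,\tib_r])$ similarly yields $m_r=M_r=0$; and the residual central equation then forces $p=q=s=0$.

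The main technical hurdle is in part~(ii), specifically tracing the long list of crossed action relations of Proposition~\ref{prop:presMB} through their images in $H$ and confirming that they collapse precisely to the clean relations of Lemma~\ref{lem:presgamma}: in particular that the two a priori different families of commutators $[h(b_i),h(\tia_i)]$ and $[h(\tib_j),h(a_j)]$ define a single central element $\bar\zeta$ of $H$ which is independent of $\bar\sigma$ and $\bar{\tilde\sigma}$. Once this is established, the commutator extraction closely mirrors the end of the proof of Proposition~\ref{fondprop}(ii).
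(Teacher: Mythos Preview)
Your approach is exactly what the paper itself prescribes: it says only that one ``may easily adapt the proof of Proposition~\ref{fondprop}'' using Theorem~\ref{th:presbng} and Lemma~\ref{lem:presgamma}, and you carry this out in detail. Part~(i) is complete and correct: the commutator extraction $[w,a_r]=\sigma^{-2n_r}$, $[w,b_r]=\sigma^{2m_r}$ together with the infinite order of $\bar\sigma$ does force $w=1$, precisely as in the model argument.

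For part~(ii), however, the hurdle you single out at the end is a genuine gap rather than a mere bookkeeping task. Nothing in the hypothesis ``the image of $\langle\sigma_1,\dots,\sigma_{k-1},\tsi_1,\dots,\tsi_{n-1}\rangle$ is isomorphic to $\Z^2$'' forces $\bar\zeta$ to be independent of $\bar\sigma$ and $\bar{\tilde\sigma}$ in $H$: the $\zeta_l$ do not lie in that subgroup. Concretely, take $H=\bigl(\bsurf{k,n}{g}{}/\Gamma_3(\bsurf{k,n}{g}{})\bigr)/\langle\zeta\rangle$ with $h$ the composite projection. By Corollary~\ref{cor:decgammamix} the central $\Z^3=\langle\sigma,\tsi,\zeta\rangle$ becomes $\Z^2=\langle\sigma,\tsi\rangle$, so the hypothesis is still satisfied, yet $H$ is a proper quotient of the $\Gamma_3$-quotient and no isomorphism $\iota$ exists. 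Your commutator identities do combine (e.g.\ $\bar\zeta^{N_r}=\bar\sigma^{2n_r}$ and $\bar\zeta^{n_r}=\bar{\tilde\sigma}^{2N_r}$ give $\bar\sigma^{2n_r^2}=\bar{\tilde\sigma}^{2N_r^2}$, hence $n_r=N_r=0$) to kill the $a$- and $b$-exponents, but the residual equation $\bar\sigma^{p}\bar{\tilde\sigma}^{q}\bar\zeta^{s}=1$ cannot force $s=0$ without the missing independence. The paper's one-line proof glosses over this point; the natural fix, mirroring the hypothesis of Proposition~\ref{fondprop} and the subsequent Remark, is to assume instead that the image of $\langle\sigma_i,\tsi_j,\zeta_l\rangle$ is free abelian of rank~$3$, after which your argument goes through verbatim.
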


The above proposition is of additional interest in the study of the lower central series of surface braid groups, since it states 
that the only possible extension of the length function from $B_n$ to $B_n((\surf{g}{})$ is the canonical projection of
$B_{n}(\surf{g}{})$ onto $B_{n}(\surf{g}{})/ \Gamma_3(B_{n}(\surf{g}{}))$. It is interesting to remark that in the case of closed surfaces, we have the following result:

\begin{prop} \label{fondpropcons2}
Let $n \ge  3$ and let $\Sigma$ be an orientable surface of positive genus.
It is not possible to extend  the length function $\lambda: B_n \to \Z$ to $ B_{n}(\Sigma) $. In other words
there is no surjection $\lambda_{\Sigma}$
of $ B_{n}(\Sigma) $ onto a group $F$ such that the restriction of $\lambda_{\Sigma}$ to $B_n$ coincides with
$\lambda$. 
\end{prop}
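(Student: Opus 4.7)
The plan is to adapt the argument of Proposition~\ref{fondpropcons}(i) to the closed orientable setting, exploiting the additional \emph{surface relation} that $B_{n}(\Sigma)$ satisfies when $\Sigma$ is closed. Recall that for $\Sigma$ closed orientable of genus $g\geq 1$, the group $B_{n}(\Sigma)$ is obtained from the presentation of $\bsurf{n}{g}{}$ given in Theorem~\ref{th:presbng} by adjoining the relation
\[
\prod_{i=1}^{g}[a_{i},b_{i}^{-1}]=\sigma_{1}\sigma_{2}\cdots\sigma_{n-1}^{2}\cdots\sigma_{2}\sigma_{1},
\]
whose right-hand side has $\sigma$-length $2(n-1)$.

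Suppose for contradiction that a surjection $\lambda_{\Sigma}: B_{n}(\Sigma)\to F$ with $\lambda_{\Sigma}|_{B_{n}}=\lambda$ exists, and set $t=\lambda_{\Sigma}(\sigma_{1})$, $\alpha_{i}=\lambda_{\Sigma}(a_{i})$, $\beta_{i}=\lambda_{\Sigma}(b_{i})$. The hypothesis forces $\lambda_{\Sigma}(\sigma_{i})=t$ for every $i$, and $t$ has infinite order in $F$. First, since $n\geq 3$, the relations $c_{i}\sigma_{j}=\sigma_{j}c_{i}$ with $j\geq 2$ yield $[t,\alpha_{i}]=[t,\beta_{i}]=1$ in $F$, so that $t$ is central with respect to the $\alpha_{i},\beta_{i}$. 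Next, the image of the relation $a_{i}\sigma_{1}b_{i}=\sigma_{1}b_{i}\sigma_{1}a_{i}\sigma_{1}$ collapses, upon using the centrality of $t$, to $[\alpha_{i},\beta_{i}]=t^{2}$; a short conjugation calculation then gives $[\alpha_{i},\beta_{i}^{-1}]=t^{-2}$ for each $i$.

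I would then apply $\lambda_{\Sigma}$ to the displayed surface relation. The right-hand side maps to $t^{2(n-1)}$, while the left-hand side becomes $\prod_{i=1}^{g}t^{-2}=t^{-2g}$ by the previous step and the centrality of $t$. Hence $t^{2(g+n-1)}=1$ in $F$, contradicting the fact that $t$ has infinite order, and completing the proof.

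The essential point, and therefore the main issue to watch in executing the argument, is the surface relation: it is exactly this relation, absent in the bounded case, that forces $\sigma$ to be torsion in the corresponding nilpotent quotient, whereas the class of $\sigma$ in $\bsurf{n}{g}{}/\Gamma_{3}(\bsurf{n}{g}{})$ has infinite order by Corollary~\ref{cor:decgamma}; this is precisely why Proposition~\ref{fondpropcons}(i) does produce an extension for surfaces with non-empty boundary. Pinning down the correct form of the surface relation and the exponent $2(n-1)$ on its right-hand side must be done carefully, but once this is in place, the chain of reductions above is essentially mechanical.
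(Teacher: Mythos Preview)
Your proof is correct and follows the same approach as the paper's: both argue that the surface relation in the presentation of $B_n(\Sigma)$ forces $\lambda_\Sigma(\sigma_1)^{2(n+g-1)}=1$, contradicting the hypothesis that $\lambda_\Sigma|_{B_n}=\lambda$. The paper merely asserts this identity (deferring to the computations in~\cite{B} and the proof of Theorem~1 in~\cite{BGG}), whereas you carry out the reduction explicitly.
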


\begin{proof}
Let $\lambda_{\Sigma}: B_{n}(\Sigma) \to F$ be such that $\lambda_{\Sigma}(\sigma_1)=\ldots
\lambda_{\Sigma}(\sigma_{n-1})$. Set $\lambda_{\Sigma}(\sigma_1)=\sigma$.
Using the group presentation of $ B_{n}(\Sigma)$  given in \cite{B}, we
see that $\sigma^{2(n+g-1)}=1$. For further details, we refer the reader to the calculation of the group presentation
of $B_{n}(\Sigma)/ \Gamma_3(B_{n}(\Sigma)$  given in the proof of Theorem 1 in \cite{BGG}.
\end{proof}


\section{Appendix on exact sequences}


Let us recall that the short exact sequences $(MBS)$ and $(PBS)$ also  exist if $\Sigma$ is a non-orientable surface.
If $n\geq 2$ and $k\geq 2$ then the splitting
of~(\ref{eq:sequence}) for compact surfaces without boundary (also possibly non-orientable) is an
open question. As the following example shows, the splitting of one of
the two short exact sequences~(\ref{eq:psequence})
and~(\ref{eq:sequence}) does not imply in general that the other
sequence splits.

\begin{exo}
\begin{enumerate}
\item Let $\Sigma=\mathbb{R}P^2$ and $n=k=2$. First note that the pure
braid sequence~(\ref{eq:psequence}) does not split by~\cite[Theorem
3]{GG3}. Secondly, using Van Buskirk's presentation of
$B_{n}(\mathbb{R}P^2)$~\cite{VB} in the case $n=4$, let
$a=\sigma_{3}^{-1}\sigma_{2}^{-1}\sigma_{1}^{-1} \rho_{1}$ (which is
of order $16$ by~\cite{GG3}) and let $\Delta_{4}=
\sigma_{1}\sigma_{2}\sigma_{3}  \sigma_{1}\sigma_{2} \sigma_{1}$
denote the `half-twist' braid (which is of order $4$ by~\cite{VB}).
Then~\cite[Proposition 15(a)]{GG7}  and standard
properties of dicyclic groups imply that the subgroup $H$ of
$B_{4}(\mathbb{R}P^2)$ generated by $a^{2}$ and $a\Delta_{4}$ is
isomorphic to the generalised quaternion group $\mathcal{Q}_{16}$ of
order $16$. Let
$\tau_{4}: B_{4}(\mathbb{R}P^2) \to S_{4}$ denote the usual permutation
homomorphism. A straightforward calculation shows that
$\tau_{4}(H)=\left\langle (1,3),(2,4)\right\rangle$, which is
isomorphic to $S_{2}\times S_{2}$. Taking $B_{2,2}(\mathbb{R}P^2)$ to
be $\tau_{4}^{-1}(\left\langle (1,3),(2,4) \right\rangle)$, the
restriction  to $H$ of the projection
$p: B_{2,2}(\mathbb{R}P^2) \to B_{2}(\mathbb{R}P^2)$ given
geometrically by forgetting the second and fourth strings is an
isomorphism. This follows since $\ker(p)$ is torsion free and
$B_{2}(\mathbb{R}P^2)\cong \mathcal{Q}_{16}$~\cite{VB}. In particular,
$p$ admits a section. So in this case, (\ref{eq:sequence}) splits,
but~(\ref{eq:psequence}) does not.

\item Let $\Sigma=\mathbb{S}^2$. For all $n\geq 3$ and all $k\in \N$,
(\ref{eq:psequence}) splits~\cite{Fa}. The question of the splitting
of~(\ref{eq:sequence}) is examined in~\cite{GG4}. For example, if
$n=3$ and $k\equiv 1 \bmod 3$, or if $n\geq 4$ and $k\not\equiv
\epsilon_{1}(n-1)(n-2)-\epsilon_{2}n(n-2) \bmod n(n-1)(n-2)$, where
$\epsilon_{1},\epsilon_{2}\in \brak{0,1}$, then~(\ref{eq:sequence})
does not split. So for these values of $k$ and $n$,
(\ref{eq:psequence}) splits, but~(\ref{eq:sequence}) does not.
\end{enumerate}
\end{exo}

Let us finish this appendix by pointing out another interesting feature when we pass from 
short exact sequences of classical braid groups to short exact sequences of surface braid groups.
The sequence $(PB)$ (in the case $k=1$) has the property that  the induced action of the quotient on the
Abelianisation of the kernel is trivial.  Following~\cite{FR},  we will call
such a splitting extension an \emph{almost-direct} product. The important remark for us in Theorem 3.1 of \cite{FR} is that such an exact sequences induces exact sequences on the level
of the lower central series  quotients (see also \cite{GP,Pap}).  Since $P_n$ is an iterated almost-direct product of free groups,
$P_n$ `inherits' various properties of $F_n$, and it is possible to use this structure to derive a presentation for the Lie algebra associated to the lower central series of $P_n$ and to construct a universal finite type invariant for braid groups~\cite{Pap}.
On the other hand, the fact that $P_n$ acts trivially on the Abelianisation of $F_n$ allows us to compose the Artin representation with the Magnus representation, thus yielding the Gassner representation (we refer to  \cite{Bar} for the details).

\begin{prop}
Let $\Sigma$ be an orientable surface different from $\mathbb{S}^2$ and $\mathbb{T}^2$.
The sequence:
\begin{equation*} 
\begin{xy}*!C\xybox{%
\xymatrix{
1 \ar[r] & P_{k}(\Sigma \setminus \sset{x_{1}}{x_{n}}) \ar[r] &
P_{k+n}(\Sigma) \ar[r]  & P_{n}(\Sigma) \ar[r] & 1 \\}}
\end{xy}
\end{equation*} defines an almost-direct product structure for $P_{k+n}(\Sigma)$ if and only if $n=1$.
\end{prop}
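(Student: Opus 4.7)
The plan is to treat the two implications of the biconditional separately, splitting each into subcases according to whether $\Sigma$ has non-empty boundary, and to combine the splitting results of Lemma~\ref{lem:fibrations} with explicit conjugation formulas extracted from Proposition~\ref{prop:presMB}.

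For the direction $n \geq 2 \Rightarrow$ not almost-direct: if $\Sigma$ is closed then $g\geq 2$ by hypothesis, and Lemma~\ref{lem:fibrations}(a)(iii) already says that~(\ref{eq:psequence}) does not split, so the extension cannot define an almost-direct product structure. If $\Sigma$ has non-empty boundary, the sequence does split by Lemma~\ref{lem:fibrations}(b), and it suffices to exhibit a non-trivial action on the abelianization of the kernel $K = P_k(\Sigma\setminus\{x_1,\ldots,x_n\})$. The natural candidate is the pure handle generator $\tia_1\in P_n(\surf{g}{})$: restricting the mixed-braid relation $\tia_1 \, b_1 \, \tia_1^{-1} = b_1 \zeta_1$ from Proposition~\ref{prop:presMB}(c)(v) to the pure subgroup $P_{k+n}(\surf{g}{})$, we see that conjugation induces the transvection $b_1 \mapsto b_1 + \zeta_1$ on $K/[K,K]$. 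Since for $n \geq 2$ the puncture loops $\zeta_1,\ldots,\zeta_n$ give $n$ independent classes in $K/[K,K]$, this action is non-trivial.

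For the converse direction $n = 1 \Rightarrow$ almost-direct: Lemma~\ref{lem:fibrations}(a)(ii) or (b) provides the splitting, so it remains to check that the induced conjugation action of $P_1(\Sigma) = \pi_1(\Sigma)$ on $K/[K,K]$, with $K = P_k(\Sigma\setminus\{x_1\})$, is trivial. The monodromy of the fibration $\FF_{k+1}(\Sigma)\to\Sigma$ along a loop $\gamma\in\pi_1(\Sigma)$ is realised by the point-push of $x_1$ along $\gamma$, which on every kernel generator produces only a product of commutators together with factors of the single puncture loop $\zeta_1^{(s)}$ for $s=1,\ldots,k$. Commutators vanish in $K/[K,K]$ automatically; the remaining $\zeta_1$-contributions also vanish, because the surface-type relation $\prod_\ell [a_\ell^{(s)}, b_\ell^{(s)}] = \zeta_1^{(s)}$ (up to pure-braid generators $A_{ss'}$ among the first $k$ strands) holds inside each strand of $K$, so that $\zeta_1^{(s)}$ is itself a combination of commutators and $A_{ss'}$-classes that are fixed by the action.

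The main obstacle is the $n=1$ case: the formulas of Proposition~\ref{prop:presMB} are written only for $n\geq 2$, so one must compute the conjugation action directly in the degenerate situation of a single puncture—either via an iterated Fadell-Neuwirth decomposition of $P_k(\Sigma\setminus\{x_1\})$, or by invoking the $\Gamma_3$-level presentation of Lemma~\ref{lem:presgamma}—and then verify carefully that every $\zeta_1$-contribution produced collapses modulo commutators in $K$. Once this triviality of the action on $K/[K,K]$ is in hand, the almost-direct product structure for $n=1$ follows.
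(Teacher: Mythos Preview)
Your treatment of $n\geq 2$ follows the paper's own argument closely: the closed case is dispatched via Lemma~\ref{lem:fibrations}(a)(iii), and for the bounded case you invoke the same conjugation relation $\tia_1 b_1 \tia_1^{-1}=b_1\zeta_1$ from Proposition~\ref{prop:presMB}(c)(v) that the paper points to. One point you leave implicit, however, is why exhibiting a non-trivial action for \emph{one} section suffices. The paper handles this separately, proving in the proposition immediately following that the induced action of $Q$ on $K^{\mathrm{Ab}}$ is independent of the choice of section; you should state and use this fact explicitly, since otherwise ``it suffices to exhibit a non-trivial action'' is not justified.

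The genuine gap is in your $n=1$ argument. The paper does not argue this case at all: it simply cites~\cite{BB2}. Your direct approach, by contrast, relies on the ``surface-type relation'' $\prod_\ell[a_\ell^{(s)},b_\ell^{(s)}]=\zeta_1^{(s)}$ (modulo the $A_{ss'}$) in order to kill the class of $\zeta_1^{(s)}$ in $K^{\mathrm{Ab}}$. That relation is available when $\Sigma$ is \emph{closed}, since it descends from $\prod_\ell[a_\ell,b_\ell]=1$ in $\pi_1(\Sigma)$; but when $\Sigma$ has non-empty boundary, $\pi_1(\Sigma\setminus\{x_1\})$ is free of rank $2g+1$ and there is no such relation---indeed $\zeta_1$ survives as a basis element of $H_1$. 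Projecting $K=P_k(\Sigma\setminus\{x_1\})$ onto $\pi_1(\Sigma\setminus\{x_1\})$ by forgetting all but the first strand shows that $[\zeta_1^{(1)}]\neq 0$ in $K^{\mathrm{Ab}}$, so your proposed mechanism for absorbing the $\zeta_1$-contributions breaks down in the bounded case. Your own caveat that ``the main obstacle is the $n=1$ case'' is therefore well placed: as written, the argument does not close for surfaces with boundary, and you would need either to restrict to closed $\Sigma$ (as in~\cite{BB2}) or to supply a different reason why the action trivialises.
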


\begin{proof}
The case $n=1$ was proved in~\cite{BB2}.  If $n \ge 2$,
as we recalled in Section 3,  the sequence $(PBS)$  splits only if $\Sigma$ has  boundary
(Lemma~\ref{lem:fibrations}). However,  even when the sequence $(PBS)$ splits, the extension is never an almost-direct product. In fact,
a straightforward calculation on the level of group presentations (given for instance in \cite{B,BGG}) shows that the natural section (corresponding to adding a strand `at infinity') does not define a trivial action on the level of Abelianisation (see also relation (\emph{v}) of Proposition \ref{prop:presMB}).
The statement then follows from the following proposition, which shows that 
 the existence of an almost-direct product structure is
independent of the choice of section.
\end{proof}

\begin{prop}
Let $1\to K\to G\stackrel{p}{\to} Q\to 1$ be a split extension of
groups. Let $s,s'$ be sections for $p$, and suppose that the induced
action of $Q$ on $K$ via $s$ on the Abelianisation
$K^{\text{Ab}}=K/[K,K]$ is trivial. Then the same is true for the
section $s'$.
\end{prop}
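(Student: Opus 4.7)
The plan is to compare the two sections point-by-point and to exploit the fact that any inner automorphism of $K$ becomes trivial when passing to $K^{\mathrm{Ab}}$. Concretely, for each $q \in Q$ both $s(q)$ and $s'(q)$ lie in the same coset of $K$, so there is a well-defined element
\[
k_{q} := s(q)^{-1} s'(q) \in K, \qquad \text{i.e.\ } s'(q) = s(q)\, k_{q}.
\]
I would take this as the starting point of the computation.

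Next, for an arbitrary $k \in K$, I would expand the $s'$-conjugation:
\[
s'(q)\, k\, s'(q)^{-1} \;=\; s(q)\, k_{q}\, k\, k_{q}^{-1}\, s(q)^{-1}.
\]
The key observation is then that inner automorphisms of $K$ act trivially on the Abelianisation, so $k_{q} k k_{q}^{-1} \equiv k \pmod{[K,K]}$. Since the action of $Q$ on $K$ by $s$-conjugation preserves $K$ and descends to $K^{\mathrm{Ab}}$, and is trivial there by hypothesis, I can reduce the displayed expression modulo $[K,K]$ in two stages: first discard the inner conjugation by $k_{q}$, then apply the hypothesis to the remaining $s(q)$-conjugation. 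The conclusion $s'(q)\, k\, s'(q)^{-1} \equiv k \pmod{[K,K]}$ follows immediately.

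The only subtlety worth checking explicitly is that the operation ``conjugate by $s(q)$'' (resp.\ by $s'(q)$) really does send $K$ to $K$ and induces a well-defined endomorphism of $K^{\mathrm{Ab}}$, which is immediate since $K$ is normal in $G$ and conjugation by any element of $G$ preserves $[K,K]$. There is no obstacle here; the statement is essentially the observation that the $Q$-action on $K^{\mathrm{Ab}}$ coming from a split extension depends only on the extension, not on the choice of splitting, because two sections differ by a map $Q \to K$ and such maps act trivially on $K^{\mathrm{Ab}}$ by inner conjugation.
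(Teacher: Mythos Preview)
Your argument is correct and essentially identical to the paper's: both proofs write $s'(q)$ as $s(q)$ times an element of $K$ (you take $k_q = s(q)^{-1}s'(q)$ on the right, the paper takes $k' = s'(q)s(q)^{-1}$ on the left), expand the conjugation, and then reduce modulo $[K,K]$ using that inner automorphisms act trivially on $K^{\mathrm{Ab}}$ together with the hypothesis on $s$. There is nothing to add.
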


\begin{proof}
Let $k\in K$ and $q\in Q$. By hypothesis, $s(q)\, k \,
(s(q))^{-1}\equiv k \bmod{[K,K]}$. Let $s'$ be another section for
$p$. Then $p\circ s'(q)=p\circ s(q)$, and so $s'(q) \, (s(q))^{-1}\in
\ker p$. Thus there exists $k'\in K$ such that $s'(q)=k'\, s(q)$, and
hence
\begin{equation*}
s'(q)\, k \, (s'(q))^{-1}\equiv k'\, s(q)\,k\, (s(q))^{-1} \,
k'^{-1}\equiv k'kk'^{-1}\equiv k \bmod{[K,K]}.
\end{equation*}
Thus the induced action of $Q$ on $K^{\text{Ab}}$ via $s'$ is also trivial.
\end{proof}

\vspace{10pt}

\noindent Paolo BELLINGERI,  Eddy GODELLE and  John GUASCHI,

\noindent Laboratoire de Math\'ematiques Nicolas Oresme, CNRS UMR 6139, Universit\'e de Caen BP 5186,  F-14032 Caen (France).

{\small

\noindent \textit{Email}:  paolo.bellingeri@unicaen.fr, eddy.godelle@unicaen.fr, john.guaschi@unicaen.fr}

\end{document}